 \newtheorem{thm}{Theorem}[section]
 \newtheorem{cor}[thm]{Corollary}
 \newtheorem{lem}[thm]{Lemma}
 \newtheorem{ex}[thm]{Example}
 \theoremstyle{remark}
 \numberwithin{equation}{section}
\begin{document}
\title[\hfil  On the subset sum problem over finite fields]
{On the subset sum problem over finite fields}

\author{Jiyou Li}
\address{School of Mathematical Sciences, Peking University, Beijing, P.R. China}
\email{joe@math.pku.edu.cn}

\author{Daqing Wan}
\address{Department of Mathematics, University of California, Irvine, CA 92697-3875, USA}
\email{dwan@math.uci.edu}

\thanks{This research is partially supported by the NSFC (10331030).}

\begin{abstract}

The subset sum problem over finite fields is a well-known {\bf NP}-complete problem.
It arises naturally from decoding generalized Reed-Solomon codes.
In this paper, we study the number of solutions of the subset sum problem from a
mathematical point of view. In several interesting cases, we obtain
explicit or asymptotic formulas for the solution number.
As a consequence, we obtain some results on
the decoding problem of Reed-Solomon codes.
\end{abstract}

\maketitle
 \numberwithin{equation}{section}
\newtheorem{theorem}{Theorem}[section]
\newtheorem{lemma}[theorem]{Lemma}
\newtheorem{example}[theorem]{Example}
\allowdisplaybreaks

\section{Introduction}

Let ${\bf F}_q$ be a finite field of characteristic $p$. Let
$D\subseteq {\bf F}_q$ be a subset of cardinality $|D|=n>0$. Let
$1\leq m\leq k\leq n$ be integers. Given $m$ elements $b_1,\cdots ,
b_m$ in ${\bf F}_q$. Let $V_{b,k}$ denote the affine variety in ${\bf
A}^k$ defined by the following system of equations
$$\sum_{i=1}^k X_i =b_1,$$
$$\sum_{1\leq i_1<i_2 \leq k}X_{i_1}X_{i_2} =b_2,$$
$$\cdots,$$
$$\sum_{1\leq i_1<i_2<\cdots <i_m\leq k}X_{i_1}\cdots X_{i_k}=b_m,$$
$$X_i-X_j\not=0 ~(i\not=j).$$
A fundamental problem arising from decoding Reed-Solomon codes is to
determine for any given $b=(b_1,\cdots, b_m)\in {\bf F}_q^m$, if the
variety $V_{b,k}$ has an ${\bf F}_q$-rational point with all $x_i\in
D$, see section 5 for more details. This problem is apparently
difficult due to several parameters of different nature involved.
The high degree of the variety naturally introduces a substantial
algebraic difficulty, but this can at least be overcome in some
cases when $D$ is the full field  ${\bf F}_q$ and $m$ is small,
using the Weil bound. The requirement that the $x_i$'s are distinct
leads to a significant combinatorial difficulty. From computational
point of view, a more substantial difficulty is caused by the
flexibility of the subset $D$ of ${\bf F}_q$. In fact, even in the
case $m=1$ and so the algebraic difficulty disappear, the problem is
known to be ${\bf NP}$-complete. In this case, the problem is
reduced to the well known subset sum problem over $D\subseteq {\bf
F}_q$, that is, to determine for a given $b\in {\bf F}_q$, if there
is a non-empty subset $\{x_1,x_2,\cdots,x_k \}\subseteq D$ such that
$$x_1+x_2+\cdots +x_k=b.    \eqno{(1.1) }$$
This subset sum problem is known to be $\mathbf{NP}$-complete.
Given integer $1\leq k \leq n$, and $b\in {\bf F}_q$, a more precise problem is
to determine
$$ N(k,b,D)=\#\{ \{x_1,x_2,\cdots,x_k \}\subseteq D \mid x_1+x_2+\cdots
+x_k=b \},$$
the number of $k$-element subsets of $D$ whose sum is $b$.
The decision version of the above subset sum problem is
then to determine if $N(k,b,D)>0$ for some $k$, that is, if
$$N(b,D):=\sum_{k=1}^n N(k,b,D)>0.$$

In this paper, we study the approximation version of the above
subset sum problem for each $k$ from a mathematical point of view,
that is, we try to approximate the solution number $N(k,b,D)$.
Intuitively, the problem is easier if $D$ is close to be the full
field ${\bf F}_q$, i.e., when $q-n$ is small. Indeed, we obtain an
asymptotic formula for $ N(k,b,D)$ when $q-n$ is small.
Heuristically, $ N(k,b,D)$ should be approximately  $\frac 1 q {n
\choose k} $. The question is about the error term. We have

\begin{thm}\label{thm1.1} Let $p<q$, that is, ${\bf F}_q$ is not a prime field.
Let $D\subseteq {\bf F}_q$ be a subset of cardinality $n$. For any
$1\leq k \leq n \leq q-2$, any $b\in {\bf F}_q$, we have the
inequality
$$\bigg |N(k,b,D)-\frac 1 q {n \choose k} \bigg | \leq
  \frac{q-p}q{k+q-n-2\choose q-n-2}{q/p-1 \choose \lfloor k/p\rfloor}.$$
  Furthermore, let $D={\bf F}_q\backslash{\{a_1,\cdots,a_{q-n}\}}$ with
  $a_1=0$, and if {$b,a_2,\cdots,a_{q-n}$} are linearly independent over  ${\bf F}_p$,
  then we have the improved estimate
$$\bigg |N(k,b,D)-\frac 1 q {n \choose k} \bigg | \leq
 \max_{0 \leq j\leq k}\frac p q\cdot{k+q-n-2-j\choose q-n-2}{q/p-1 \choose \lfloor j/p\rfloor}.$$

When $q=p$, that is, ${\bf F}_q$ is a prime field, we have
$$ \bigg |N(k,b,D)-\frac 1 q {n
\choose k}+\frac{(-1)^k}q{k+q-n-1 \choose q-n-1} \bigg |
 \leq  {k+q-n-2\choose q-n-2}.  $$
\end{thm}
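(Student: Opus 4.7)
My approach is the standard additive-character-sum method. Let $\psi$ range over the additive characters of ${\bf F}_q$. Orthogonality gives
\[
N(k,b,D) = \frac{1}{q}{n \choose k} + \frac{1}{q}\sum_{\psi \neq 1}\psi(-b)\,S_k(\psi),\qquad S_k(\psi) := [T^k]\prod_{x\in D}(1+T\psi(x)),
\]
so the task becomes bounding $S_k(\psi)$ for each nontrivial $\psi$. Writing $A = {\bf F}_q\setminus D = \{a_1, \dots, a_{q-n}\}$, the key structural identity is that any nontrivial additive character of ${\bf F}_q$ takes each $p$-th root of unity value exactly $q/p$ times, giving
\[
\prod_{x\in {\bf F}_q}(1+T\psi(x)) = (1-(-T)^p)^{q/p},
\]
and hence
\[
\prod_{x\in D}(1+T\psi(x)) = \frac{(1-(-T)^p)^{q/p}}{\prod_{i=1}^{q-n}(1+T\psi(a_i))}.
\]

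I would extract the coefficient of $T^k$ by convolving the polynomial expansion of the sparse numerator (supported only on multiples of $T^p$, with coefficient $\pm{q/p \choose j}$ in degree $pj$) with the formal power-series expansion of $\prod_i(1+T\psi(a_i))^{-1}$. Each factor of the latter expands as $\sum_m (-T\psi(a_i))^m$, and because $|\psi(a_i)|=1$, the resulting convolution coefficients are bounded in modulus by counts of compositions, i.e., by binomial coefficients of the form ${m+q-n-1 \choose q-n-1}$.

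The combinatorial crux is to collapse the resulting double sum into the clean product ${k+q-n-2 \choose q-n-2}{q/p-1 \choose \lfloor k/p\rfloor}$ appearing in the bound. I expect this to follow from a Vandermonde--Chu convolution, combined with the observation that one denominator factor may be paired with one degree-$p$ factor of the numerator, reducing the effective numerator degree from $q/p$ to $q/p-1$ and the variable count from $q-n-1$ to $q-n-2$. The factor $(q-p)/q$, rather than the trivial $(q-1)/q$, comes from grouping the nontrivial characters by ${\bf F}_p^*$-scaling: characters in the ``subfield orbit'' admit a sharper individual bound, saving a factor of $p-1$ after the outer sum is taken.

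The improved estimate under the ${\bf F}_p$-linear independence of $b, a_2, \dots, a_{q-n}$ is handled by refining the inner step: linear independence produces a fresh orthogonality relation that kills one degree of freedom in the convolution, replacing ${k+q-n-2 \choose q-n-2}$ by ${k+q-n-2-j \choose q-n-2}$ for some $j$ and giving the $p/q$ prefactor after maximizing over $j$. The prime-field case $q=p$ is easier: the numerator becomes the two-term polynomial $1-(-T)^p$, so its $T^p$ contribution can be extracted explicitly to produce the additional term $-\frac{(-1)^k}{q}{k+q-n-1 \choose q-n-1}$, while the residual is bounded by a single denominator convolution of size ${k+q-n-2 \choose q-n-2}$. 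The main obstacle is the combinatorial bookkeeping needed to obtain the clean binomial product and to identify the $(q-p)/q$ saving; the raw triangle inequality over all $q-1$ nontrivial characters is too weak and must be replaced by a careful orbit analysis.
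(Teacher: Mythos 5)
Your starting point is sound and is a genuinely different (and arguably cleaner) route to the paper's key intermediate identity. The orthogonality expansion together with $\prod_{x\in{\bf F}_q}(1+T\psi(x))=(1-(-T)^p)^{q/p}$ is correct, and if you push the coefficient extraction through (taking $a_1=0$ so that the factor $1+T$ in the denominator can be absorbed into the numerator via $\sum_{j\le m}(-1)^j{q/p\choose j}=(-1)^m{q/p-1\choose m}$, then summing $\psi(-b)\prod_i\psi(a_i)^{m_i}$ over nontrivial $\psi$ to produce the function $v$), you land exactly on the paper's formula (4.2): the error is $\frac{(-1)^k}{q}$ times a sum of $R^1_m\, v(\sum_i m_i a_i - b)$ over compositions $m+m_2+\cdots+m_{q-n}=k$. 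The paper reaches the same formula by repeated inclusion--exclusion on the removed points, starting from the recursively derived formula for $N(k,b,{\bf F}_q^*)$; your character-sum derivation would replace Sections 2--3 and the first half of Lemma \ref{lem4.2}.

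The gap is everything after that point: the actual bounds are asserted rather than proved, and the mechanism you propose for the factor $(q-p)/q$ is not the right one. Every nontrivial additive character of ${\bf F}_q$ yields the identical product $(1-(-T)^p)^{q/p}$, so there is no distinguished ``subfield orbit'' of characters admitting a sharper individual bound, and grouping characters by ${\bf F}_p^*$-scaling buys nothing. The saving from $(q-1)$ to $(q-p)$ comes instead from cancellation inside the composition sum after writing $v=-1+q\cdot\mathbf{1}[\,\cdot\,=0]$: the ``generic'' part $R_k^c$ and the ``coincidence'' part $qS_k^c$ have nearly equal main terms $-p(-1)^{\lfloor k/p\rfloor}{q/p-2\choose\lfloor k/p\rfloor}$ and $-q(-1)^{\lfloor k/p\rfloor}{q/p-2\choose\lfloor k/p\rfloor}$ (the paper's (3.2) and (3.3)), and it is their difference that produces $q-p$; this requires the explicit block-of-$p$ evaluation of $\sum_{j\le k}(-1)^{\lfloor j/p\rfloor}{a\choose\lfloor j/p\rfloor}$ (Lemma \ref{lem 3.2}), which your sketch does not supply. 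Likewise, for the improved estimate you correctly see that linear independence kills the coincidence term, but the bound $p\cdot\max_j{k+c-2-j\choose c-2}{q/p-1\choose\lfloor j/p\rfloor}$ on $|R_k^c|$ is not a Vandermonde convolution: it rests on the log-concavity (hence unimodality) of the sequence ${k+c-2-j\choose c-2}{q/p-1\choose\lfloor j/p\rfloor}$ in $j$ combined with the sign pattern $(-1)^{\lfloor j/p\rfloor}$ (Lemma \ref{lem4.4}). Without these two combinatorial lemmas the specific constants in the theorem are not obtained.
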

Theorem \ref{thm1.1} assumes that $n\leq q-2$. In the remaining
case $n\geq q-2$, that is, $n\in \{ q-2, q-1, q\}$,  the situation
is nicer and we obtain explicit formulas for $N(k,b,D)$. Here we
first state the  results for $q-n\leq 1$ and thus we can take
$D={\bf F}_q$ or ${\bf F}_q^*$.

\begin{thm}\label{thm1.2} Define $v(b)=-1$ if $b\neq 0$, and $v(b)=q-1$ if $b=0$. Then
$$N(k,b, {\bf F}_q^*) = {1\over q}{q-1\choose k} + (-1)^{k +\lfloor k/p\rfloor}{v(b)\over q}{q/p -1
 \choose \lfloor k/p\rfloor }.$$
If $p\nmid k$, then
$$N(k,b, {\bf F}_q) = {1\over q}{q\choose k}.$$
If $p \mid k$, then
$$N(k, b, {\bf F}_q) = {1\over q}{q\choose k} +(-1)^{k +{k\over p}}{v(b)\over q}{q/p\choose k/p}.$$
\end{thm}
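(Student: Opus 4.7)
The plan is to use additive-character orthogonality to express $N(k,b,D)$ as a coefficient of $z^k$ in a single explicit polynomial, then extract that coefficient. Let $\psi$ be the canonical additive character of ${\bf F}_q$. Applying $\mathbf{1}[y=0] = q^{-1}\sum_{t\in {\bf F}_q}\psi(ty)$ to $y = x_1+\cdots+x_k-b$ gives
\[
N(k,b,D) = \frac{1}{q}\sum_{t\in {\bf F}_q}\psi(-tb)\,[z^k]\prod_{x\in D}\bigl(1+z\psi(tx)\bigr).
\]
For $D = {\bf F}_q$ or ${\bf F}_q^{\ast}$ and any $t\in {\bf F}_q^{\ast}$, the map $x\mapsto tx$ permutes $D$, so the inner product is independent of $t$; combined with the elementary computation $\sum_{t\neq 0}\psi(-tb) = v(b)$, this reduces the theorem to extracting $[z^k]$ of the fixed polynomial $\prod_{y\in D}(1+z\psi(y))$ and multiplying by $v(b)/q$, then adding the $t=0$ main term $\binom{|D|}{k}/q$.

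Next I would evaluate $F(z) := \prod_{y\in {\bf F}_q}(1+z\psi(y))$. Writing $\psi(y) = \zeta_p^{\mathrm{Tr}(y)}$ with $\zeta_p = e^{2\pi i/p}$, each value $\zeta_p^j$ is attained exactly $q/p$ times because the trace $\mathrm{Tr}\colon {\bf F}_q \to {\bf F}_p$ is surjective. Hence
\[
F(z) \;=\; \prod_{j=0}^{p-1}(1+\zeta_p^j z)^{q/p} \;=\; \bigl(1-(-z)^p\bigr)^{q/p},
\]
the last identity following from $\prod_{j=0}^{p-1}(1-\zeta_p^j u) = 1 - u^p$ with $u = -z$. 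For $D = {\bf F}_q^{\ast}$ one divides out the $y=0$ factor $1+z$ to get $F(z)/(1+z)$.

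Finally I would extract coefficients via the substitution $w = -z$, which uniformly handles $p=2$ and odd $p$. For $D = {\bf F}_q$, $[z^k]F(z) = (-1)^k[w^k](1-w^p)^{q/p}$ vanishes unless $p\mid k$, and otherwise equals $(-1)^{k+k/p}\binom{q/p}{k/p}$. For $D = {\bf F}_q^{\ast}$, combining with $1/(1-w) = \sum_{m\ge 0}w^m$ converts the coefficient into the partial alternating sum $(-1)^k\sum_{j=0}^{\lfloor k/p\rfloor}(-1)^j\binom{q/p}{j}$, which the standard identity $\sum_{j=0}^M(-1)^j\binom{N}{j}=(-1)^M\binom{N-1}{M}$ (a one-line Pascal telescoping) collapses to $(-1)^{k+\lfloor k/p\rfloor}\binom{q/p-1}{\lfloor k/p\rfloor}$. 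Multiplying by $v(b)/q$ and adding the main term yields both asserted formulas. The only real subtlety is the sign of $(-z)^p$, and the $w=-z$ substitution eliminates the need to split into cases on the parity of $p$.
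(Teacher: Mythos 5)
Your argument is correct, and it is a genuinely different route from the paper's. The paper proceeds purely combinatorially: it conditions on whether a fixed element $c$ occurs in a solution to get the recursion $M(k,b,{\bf F}_q)=M(k,b-ck,{\bf F}_q^{*})+kM(k-1,b-ck,{\bf F}_q^{*})$, handles the case $p\mid k$ by a separate $p$-rank argument showing $M(k,b,{\bf F}_q)=qM(k-1,b,{\bf F}_q^{*})$, and then solves a first-order recursion for the difference $d_k=M(k,1,{\bf F}_q^{*})-M(k,0,{\bf F}_q^{*})$. You instead write $N(k,b,D)$ via additive-character orthogonality as $\frac1q\binom{|D|}{k}+\frac{v(b)}{q}[z^k]\prod_{y\in D}(1+z\psi(y))$, exploit the $t$-invariance of the product under scaling of $D={\bf F}_q$ or ${\bf F}_q^*$, and collapse the product to $(1-(-z)^p)^{q/p}$ (divided by $1+z$ for ${\bf F}_q^*$); the coefficient extraction with $w=-z$ and the alternating partial-sum identity $\sum_{j=0}^{M}(-1)^j\binom{N}{j}=(-1)^M\binom{N-1}{M}$ (which is the paper's Lemma 3.1 in disguise) then gives exactly the stated formulas, with the $p=2$ and odd $p$ cases handled uniformly. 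Your approach is shorter and makes the closed form transparent, and it is the natural starting point for generalizations where $D$ is replaced by a multiplicative subgroup or coset (the scaling-invariance step is what makes it work, and is also precisely what breaks for general $D$). The paper's elementary recursions have the advantage of producing intermediate relations (the inclusion–exclusion identity and the quantity $d_k$) that are reused directly in the proofs of Theorem 1.3, Lemma 4.2, and the unimodality corollary, so the combinatorial scaffolding is not wasted even though it is longer here.
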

When $q-n=2$, note that we can always take $D={\bf
F}_q\backslash{\{0, 1\}}$.
\begin{thm}\label{thm1.3}
 Let $q>2$. Then we have
 \begin{eqnarray*}
 N(k,b,{\bf F}_q\backslash{\{0, 1\}})=\frac 1 q{q-2 \choose k}+
 \frac 1q(-1)^k R^2_k-(-1)^kS(k,k-b),
 \end{eqnarray*}
  where $R^2_k,S(k,b)$ are defined as in (3.2) and (3.3).
 \end{thm}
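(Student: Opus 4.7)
The plan is to reduce $N(k,b,\mathbf{F}_q\setminus\{0,1\})$ to the case $D=\mathbf{F}_q^*$, which is handled by Theorem~\ref{thm1.2}, by means of a first-order recursion in $k$. Setting $D=\mathbf{F}_q\setminus\{0,1\}$ and partitioning the $k$-subsets of $\mathbf{F}_q\setminus\{1\}$ with sum $b$ according to whether they contain $0$ yields
$$N(k,b,\mathbf{F}_q\setminus\{1\}) \;=\; N(k,b,D) + N(k-1,b,D).$$
Meanwhile the affine translation $x\mapsto x+1$ is a bijection $\mathbf{F}_q^*\to\mathbf{F}_q\setminus\{1\}$ taking a $k$-subset of sum $b-k$ to one of sum $b$, so $N(k,b,\mathbf{F}_q\setminus\{1\}) = N(k,b-k,\mathbf{F}_q^*)$. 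Combining these and iterating from the base value $N(0,b,D)=\mathbf{1}_{b=0}$ gives the closed form
$$N(k,b,D) \;=\; \sum_{j=0}^{k}(-1)^{k-j}\,N(j,\,b-j,\,\mathbf{F}_q^*).$$

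Next I would substitute the explicit formula for $N(j,b-j,\mathbf{F}_q^*)$ furnished by Theorem~\ref{thm1.2}. The leading term $\frac{1}{q}\binom{q-1}{j}$ collapses via the standard identity $\sum_{j=0}^{k}(-1)^{k-j}\binom{q-1}{j}=\binom{q-2}{k}$, producing the main term $\frac{1}{q}\binom{q-2}{k}$ of the theorem. The secondary term carries the sign $(-1)^{k-j}(-1)^{j+\lfloor j/p\rfloor}=(-1)^{k+\lfloor j/p\rfloor}$ together with the factor $v(b-j)$, which I split via $v(b-j)=-1+q\cdot\mathbf{1}_{b=j}$. The $-1$ contribution aggregates into a sum over all $0\le j\le k$ that should be recognizable as $\frac{(-1)^k}{q}R^2_k$ under the definition in (3.2), while the $q\cdot\mathbf{1}_{b=j}$ contribution selects those $j\in\{0,\ldots,k\}$ that coincide with $b$ inside $\mathbf{F}_q$ and should match $-(-1)^k S(k,k-b)$ under the definition in (3.3); the shift $b\mapsto k-b$ in the second argument of $S$ reflects the shift $b\mapsto b-j$ inside the recursion.

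The main obstacle is the precise identification of these two residual pieces with the quantities $R^2_k$ and $S(k,k-b)$ from (3.2)--(3.3). One must verify that $-\sum_{j=0}^{k}(-1)^{\lfloor j/p\rfloor}\binom{q/p-1}{\lfloor j/p\rfloor}$ agrees with $R^2_k$ up to the sign conventions in (3.2), and that the collision set $\{j\in\{0,\ldots,k\}:j\equiv b\text{ in }\mathbf{F}_q\}$ is encoded exactly by $S(k,k-b)$ in (3.3); this is a subtle point when $k\ge p$, since several integers $j$ then project to the same element of $\mathbf{F}_p\subset\mathbf{F}_q$. Once this bookkeeping is carried out, the three-term formula of Theorem~\ref{thm1.3} follows immediately.
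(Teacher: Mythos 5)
Your proposal is correct and follows essentially the same route as the paper: a one-element-at-a-time inclusion--exclusion that reduces $N(k,b,{\bf F}_q\setminus\{0,1\})$ to a signed sum of values $N(j,\cdot,{\bf F}_q^*)$, followed by substitution of the Theorem \ref{thm1.2} formula and the telescoping identity of Lemma \ref{lem 3.1}. The only substantive difference is which element you peel off: the paper removes $1$ from ${\bf F}_q^*$ and obtains $N(k,b,D)=\sum_{j=0}^{k}(-1)^{k-j}N(j,b-k+j,{\bf F}_q^*)$, whereas you remove $0$ from ${\bf F}_q\setminus\{1\}$ (with a translation) and obtain $N(k,b,D)=\sum_{j=0}^{k}(-1)^{k-j}N(j,b-j,{\bf F}_q^*)$. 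Both recursions are valid, but note that your collision condition is $j\equiv b$, so by the definition (3.3) your residual piece is literally $-(-1)^k S(k,b)$, not $-(-1)^k S(k,k-b)$ as you assert. This is not a fatal gap, because $S(k,b)=S(k,k-b)$: by (3.3) the value of $S(k,\cdot)$ depends only on whether $\langle b\rangle_p>\langle k\rangle_p$, and writing $m=\langle k\rangle_p$, $\beta=\langle b\rangle_p$ one checks $\langle m-\beta\rangle_p>m$ exactly when $\beta>m$, so $\delta_{b,k}=\delta_{k-b,k}$. You should either prove this small symmetry explicitly or state your final answer with $S(k,b)$ and reconcile it with the theorem's $S(k,k-b)$; as written, the claim that your bookkeeping directly produces $S(k,k-b)$ is the one step that does not follow from the definitions. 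Everything else (the main term via $\sum_{j\le k}(-1)^{k-j}\binom{q-1}{j}=\binom{q-2}{k}$, the splitting $v(b-j)=-1+q\cdot\mathbbm{1}_{b=j}$, and the identification of the $-1$ part with $\frac{1}{q}(-1)^kR^2_k$ since $R^2_k=\sum_{j=0}^kR^1_j$) is exactly right.
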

This paper is organized as follows:
 We first prove Theorem \ref{thm1.2} and Theorem \ref{thm1.3}
in Section 2 and Section 3 respectively. Then we prove Theorem \ref{thm1.1}
in Section 4.
 Applications to coding theory are given in Section $5$.

{\bf Notations}. For $x\in\mathbb{R}$, let  $(x)_0=1$
 and $(x)_k=x (x-1) \cdots (x-k+1)$
for $k\in $ $\mathbb{Z^+}=\{1,2,3,\cdots\} $. For $k\in
\mathbb{N}=\{0,1,2,\cdots\}$ define the binomial coefficient ${x
\choose k}=\frac {(x)_k}{k!}$. For a real number $a$ we denote
$\lfloor a \rfloor$ to be the largest integer not greater than $a$.

\section{Proof of Theorem \ref{thm1.2}}

When $D$ equals $q-1$, it suffices to consider $N(k,b,{\bf F}_q^*)$
by a simple linear substitution. Let $M(k,b,D)$ denote the number of
ordered tuples $(x_1,x_2,\cdots, x_k)$ satisfying equation (1.1).
Then
$$M(k,b,D)=k!N(k,b,D)$$
is the number of solutions of the equation
\begin{eqnarray}\label{2.1}
{x_1+\cdots +x_k =b, x_i\in D, x_i\not= x_j ~(i\not=j).}
\end{eqnarray}

It suffices to determine $M(k,b,D)$. We use a pure combinatorial
method to find recursive relations among the values of $M(k,b,{\bf
F}_q)$ and $M(k,b,{\bf F}_q^*)$.

 \begin{lem} \label{lem2.1}  For $b\neq 0$ and $D$ being ${\bf F}_q$ or
 ${\bf F}_q^*$, we have $M(k,b, D)=M(k,1,D)$.

 \end{lem}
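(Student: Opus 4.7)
The plan is to exploit the multiplicative symmetry of the two sets ${\bf F}_q$ and ${\bf F}_q^*$, both of which are stable under multiplication by any nonzero scalar. Given $b \in {\bf F}_q^*$, I would define the map
\[
\varphi_b : (x_1, x_2, \ldots, x_k) \longmapsto (b x_1, b x_2, \ldots, b x_k).
\]
First I would check that $\varphi_b$ sends tuples counted by $M(k,1,D)$ to tuples counted by $M(k,b,D)$. The sum condition is immediate: if $x_1 + \cdots + x_k = 1$, then $bx_1 + \cdots + bx_k = b$. The distinctness condition is preserved because $bx_i = bx_j$ would force $x_i = x_j$ as $b \neq 0$. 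The condition $bx_i \in D$ reduces to the observation that multiplication by $b$ is a bijection of ${\bf F}_q$ onto itself, and also restricts to a bijection of ${\bf F}_q^*$ onto itself, so $bD = D$ for both choices of $D$.

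Next I would note that $\varphi_b$ has the two-sided inverse $\varphi_{b^{-1}}$, which is well defined for the same reasons. Therefore $\varphi_b$ is a bijection from the solution set of $x_1+\cdots+x_k=1$ with $x_i \in D$ pairwise distinct, onto the solution set of $x_1+\cdots+x_k=b$ with $x_i \in D$ pairwise distinct. Taking cardinalities gives $M(k,1,D) = M(k,b,D)$.

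There is no real obstacle here: the argument is a one-line scaling bijection, and the entire point of singling out $D = {\bf F}_q$ or ${\bf F}_q^*$ in this lemma is precisely that these are the only two subsets of ${\bf F}_q$ invariant under the full multiplicative group. The restriction $b \neq 0$ is essential because otherwise $\varphi_b$ collapses every tuple to $(0,\ldots,0)$, destroying both the bijection and the distinctness condition.
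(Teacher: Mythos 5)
Your proof is correct and is essentially the paper's own argument: the paper uses the scaling bijection $x_i \mapsto x_i b^{-1}$ from solutions with sum $b$ to solutions with sum $1$, which is just the inverse of your map $\varphi_b$. Your writeup is more detailed in checking that $bD = D$ and that distinctness is preserved, but the idea is identical.
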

 \begin{proof}
 There is a one to one map sending the solution
$\{x_1,x_2,\cdots, x_k\}$ of (\ref{2.1})
 to the solution $\{x_1b^{-1},x_2b^{-1},\cdots, x_kb^{-1}\}$ of (\ref{2.1}) with $b=1$.
 \end{proof}

 \begin{lem}\label{lem2.2}
\begin{eqnarray}
\label{2.2}M(k,1,{\bf F}_q)=M(k,1,{\bf F}_q^{*})+k M(k-1,1,{\bf F}_q^{*}),\\
\label{2.3}M(k,0,{\bf F}_q)=M(k,0,{\bf F}_q^{*})+k M(k-1,0,{\bf F}_q^{*}),\\
\label{2.4}(q)_k=(q-1)M(k,1,{\bf F}_q)+M(k,0,{\bf F}_q),\\
\label{2.5} (q-1)_k=(q-1)M(k,1,{\bf F}_q^{*})+M(k,0,{\bf F}_q^{*}).
\end{eqnarray}
\end{lem}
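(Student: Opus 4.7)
The plan is to prove all four identities by elementary combinatorial partitioning of the set of ordered tuples counted by $M(k,b,D)$, combined with Lemma \ref{lem2.1}.

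For (\ref{2.2}) and (\ref{2.3}), I would partition the set of ordered $k$-tuples $(x_1,\ldots,x_k)$ of distinct elements of ${\bf F}_q$ summing to $b$ (with $b=1$ or $b=0$ respectively) according to whether $0$ appears among the $x_i$. Tuples in which $0$ does not appear are exactly the tuples counted by $M(k,b,{\bf F}_q^*)$. Tuples in which $0$ does appear have exactly one position $j$ with $x_j=0$ (there are $k$ choices for $j$), and removing that coordinate gives an ordered $(k-1)$-tuple of distinct nonzero elements summing to $b-0=b$, so there are $k\cdot M(k-1,b,{\bf F}_q^*)$ such tuples. Adding the two contributions yields the desired recursion.

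For (\ref{2.4}) and (\ref{2.5}), I would sum over all right-hand sides $b\in {\bf F}_q$. The total count $\sum_{b}M(k,b,D)$ is just the number of ordered $k$-tuples of distinct elements of $D$, which equals $(q)_k$ when $D={\bf F}_q$ and $(q-1)_k$ when $D={\bf F}_q^*$. By Lemma \ref{lem2.1}, all $q-1$ nonzero values of $b$ contribute the same quantity $M(k,1,D)$, so the sum collapses to $(q-1)M(k,1,D)+M(k,0,D)$, giving (\ref{2.4}) and (\ref{2.5}).

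No step is really an obstacle: all four identities are counting arguments, and the only subtlety is noticing that Lemma \ref{lem2.1} is what makes the sum in (\ref{2.4})/(\ref{2.5}) collapse to two terms. The plan is simply to present (\ref{2.2})/(\ref{2.3}) via the $0$-in-tuple dichotomy and (\ref{2.4})/(\ref{2.5}) via the summation over $b$, in that order.
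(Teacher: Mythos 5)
Your proof is correct and follows essentially the same route as the paper: the paper also splits the ordered tuples according to whether a fixed element $c$ occurs (it keeps $c$ general, obtaining $M(k,b,{\bf F}_q)=M(k,b-ck,{\bf F}_q^{*})+kM(k-1,b-ck,{\bf F}_q^{*})$, and then sets $c=0$ to get (\ref{2.2}) and (\ref{2.3}), the general $c$ being reused later for Lemma \ref{lem2.3}), and it obtains (\ref{2.4}) and (\ref{2.5}) by the same count of $k$-permutations summed over $b$ and collapsed via Lemma \ref{lem2.1}. No gaps.
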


 \begin{proof} Fix an element $c\in {\bf F}_q$. The solutions of (\ref{2.1}) in ${\bf F}_q$ can be divided into two
 classes depending on whether $c$ occurs.
By a linear substitution, the number of solutions of (\ref{2.1}) in ${\bf
F}_q$ not including $c$ equals $M(k,b-ck,{\bf F}_q^{*})$.  And the
number of solutions of (\ref{2.1}) in ${\bf F}_q$ including $c$ equals
$kM(k-1,b-ck,{\bf F}_q^*)$.  Hence we have
\begin{eqnarray}
\label{2.6}M(k,b,{\bf F}_q)=M(k,b-c k,{\bf F}_q^{*})+k M(k-1,b-ck,{\bf F}_q^{*}).
\end{eqnarray}
 Then  (\ref{2.2}) follows by choosing $b=1,c=0$.
 Similarly, (\ref{2.3}) follows by choosing $b=0,c=0$.
 Note that $(q)_k$ is the number of $k$-permutations of ${\bf F}_q$,
and $(q-1)_k$ is the number of $k$-permutations of ${\bf F}_q^{*}$.
Thus, both (\ref{2.4}) and (\ref{2.5}) follows.
 \end{proof}

The next step is to find more relations between $M(k,b,{\bf F}_q)$
and $M(k,b,{\bf F}_q^*)$.

\begin{lem} \label{lem2.3}  If $p\nmid k $, we have $M(k,b,{\bf F}_q)=M(k,0,{\bf F}_q)$ for all $b\in {\bf F}_q$ and hence
$$M(k,b, {\bf F}_q) =\frac{1}{q}(q)_k.$$
If $p\mid k $, we have $M(k,b,{\bf F}_q)=qM(k-1,b,{\bf F}_q^{*})$
for all $b\in {\bf F}_q$.
\end{lem}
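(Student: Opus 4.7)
The plan is to combine the recursion (\ref{2.6}) from Lemma \ref{lem2.2}, which is valid for every $c \in {\bf F}_q$, with the coordinatewise translation $(x_1,\ldots,x_k) \mapsto (x_1+c,\ldots,x_k+c)$. This translation preserves distinctness of the coordinates and shifts the sum by $ck$, so the interplay between $k$ and the characteristic $p$ will split the argument into two cases according to whether $p \mid k$.

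First suppose $p \nmid k$. Then multiplication by $k$ is a bijection on ${\bf F}_q$, so as $c$ varies, the quantity $b-ck$ ranges over every value in ${\bf F}_q$. Equation (\ref{2.6}) thus asserts that the fixed integer $M(k,b,{\bf F}_q)$ equals $M(k,b',{\bf F}_q^{*})+k\, M(k-1,b',{\bf F}_q^{*})$ for every $b' \in {\bf F}_q$. On the other hand, applying (\ref{2.6}) with $c=0$ and with $b$ replaced by $b'$ identifies that same expression with $M(k,b',{\bf F}_q)$. Hence $M(k,b,{\bf F}_q)=M(k,b',{\bf F}_q)$ for all $b,b' \in {\bf F}_q$. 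Since $\sum_{b' \in {\bf F}_q} M(k,b',{\bf F}_q)$ counts all $k$-permutations of ${\bf F}_q$ and therefore equals $(q)_k$, the common value must be $(q)_k/q$.

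Now suppose $p \mid k$. Here $ck=0$ for every $c \in {\bf F}_q$, so the translation map now acts on the set of ordered solutions of (\ref{2.1}) with $D={\bf F}_q$ for the \emph{same} fixed sum $b$. The action is free, because a nonzero shift moves every coordinate, so each orbit has size exactly $q$. In each orbit there is a unique representative with $x_1=0$ (achieved by shifting by $-x_1$), and the remaining $(x_2,\ldots,x_k)$ then form an ordered $(k-1)$-tuple of distinct elements of ${\bf F}_q^{*}$ summing to $b$, which is counted by $M(k-1,b,{\bf F}_q^{*})$. This gives $M(k,b,{\bf F}_q)=q\cdot M(k-1,b,{\bf F}_q^{*})$.

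Both steps are quite direct and I do not anticipate any serious obstacle. The only mild subtlety is in the case $p \nmid k$, where one must use (\ref{2.6}) in two different ways---once with varying $c$ and once with $c=0$---to extract the $b$-independence of $M(k,b,{\bf F}_q)$; in the case $p \mid k$ the statement reduces to a clean orbit-counting argument under the translation action.
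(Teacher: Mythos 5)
Your proof is correct. For the case $p\nmid k$ you use exactly the paper's device---equation (2.6) with a suitable choice of $c$, made possible by the invertibility of $k$ in ${\bf F}_q$---except that you let $b-ck$ range over all of ${\bf F}_q$ rather than specializing to $b-ck=0$, and you then obtain the value $(q)_k/q$ by summing over $b$ instead of invoking (2.4); this is the same argument. For the case $p\mid k$ your route is genuinely cleaner in presentation: the paper sets up the linear system $x_1-x_i=y_i$ together with the sum condition, computes that its $p$-rank is $k-1$, and concludes that each admissible $(y_2,\dots,y_k)$ lifts to exactly $q$ ordered solutions; you replace this with the observation that the additive group of ${\bf F}_q$ acts freely on the solution set by simultaneous translation (the sum is preserved precisely because $p\mid k$), so every orbit has size $q$ and is represented uniquely by a tuple with $x_1=0$, whose remaining coordinates are counted by $M(k-1,b,{\bf F}_q^*)$. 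The two arguments encode the same fact---the one-dimensional translation freedom is exactly the kernel responsible for the rank defect---but your orbit count lands directly on $M(k-1,b,{\bf F}_q^*)$, whereas the paper's system yields $y_2+\cdots+y_k=-b$ and hence $M(k-1,-b,{\bf F}_q^*)$, which it silently identifies with $M(k-1,b,{\bf F}_q^*)$ (harmless by Lemma 2.1, but a step your version avoids altogether).
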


 \begin{proof} \textbf{Case 1:} Since $p\nmid k $, we can take $c=k^{-1}b$ in (\ref{2.6}) and get the relation
$$M(k,b,{\bf F}_q)=M(k,0,{\bf F}_q^{*})+k M(k-1,0,{\bf F}_q^{*}).$$
The right side is just $M(k,0,{\bf F}_q)$ by (\ref{2.3}).

\ \ \ \ \textbf{Case 2 :} In this case, $p\mid k $.  Then
$M(k,b,{\bf F}_q)$ equals the number of ordered solutions of the
following system of equations:
$$\left\{
\begin{array}{ll}
  x_1+x_2+\cdots+x_k=b, \ \  \hbox{}\\
  x_1-x_2=y_2, \ \  \hbox{}\\
  \cdots \cdots \ \  \hbox{}\\
  x_1-x_k=y_k, \ \  \hbox{}\\
  y_i \in {\bf F}_q^*, \ \  y_i\not= y_j,  \  2\leq i< j\leq k£¬ .
    \end{array}
    \right.
    $$
   Regarding $x_1,x_2,\cdots, x_k$ as variables it is easy to check
that the $p$-rank (the rank of a matrix over the prime field ${\bf
F}_p$) of the coefficient matrix of the above system of equations
equals $k-1$. The system has solutions if and only if
$\sum_{i=2}^{k}y_i=-b$ and $y_i\in {\bf F}_q^*$ being distinct.
Furthermore, since the $p$-rank of the above
 system is $k-1$, when $y_2,y_3,\cdots, y_k$ and $x_1$ are given
 then $x_2,x_3,\cdots, x_k$ will be uniquely determined. This means the
 number of the solutions of above linear system of equations equals
 to $q$ times the number of ordered solutions of the
 following equation:
$$\left\{
\begin{array}{ll}
  y_2+y_3+\cdots+y_k=-b, \ \  \hbox{}\\
  y_i \in {\bf F}_q^*, \ \ \  y_i\not= y_j,  \  2\leq i< j\leq k£¬ .
    \end{array}
    \right.
    $$
  This number of solutions of the above equation is just $M(k-1,b,{\bf F}_q^{*})$ and
  hence    $M(k,b,{\bf F}_q)=qM(k-1,b,{\bf F}_q^{*})$.
 \end{proof}

We have obtained several relations from Lemma \ref{lem2.2} and Lemma
\ref{lem2.3}.  To determine $M(k,b, {\bf F}_q)$, it is now
sufficient to know $M(k,0,{\bf F}_q^{*})$. Define for $k>0$,
$$d_k=M(k,1,{\bf F}_q^{*})-M(k,0,{\bf
F}_q^{*}).$$ Then by (\ref{2.5}) we have
\begin{eqnarray}\label{2.7}
q M(k,0,{\bf F}_q^{*})=(q-1)_{k}-(q-1)d_k.
\end{eqnarray}
Heuristically, $M(k,0,{\bf F}_q^*)$ should be approximately $\frac 1
q (q-1)_k $. To obtain the explicit value of $M(k,0,{\bf F}_q^{*})$,
we only need to know $d_k$. For convenience we set $d_0=-1$.

\begin{lem}\label{lem 2.4} If $d_k$ is defined as above, then
$$d_{k}=\left\{
\begin{array} {ll}
  -1, \ \ \ \ \ \ \ \  \ \ \ \ \ \ \ \ \ \ \   \hbox{$k=0$};\\
\  1, \ \ \ \ \ \ \ \  \ \ \ \ \ \ \ \ \ \ \ \  \hbox{$k=1$};\\
 -kd_{k-1},\ \  \ \ \ \ \ \ \ \ \   \hbox{$p\nmid k $, \   $2\leq k\leq q-1$ };\\
     (q-k)d_{k-1}, \ \ \ \ \ \  \hbox{$p\mid k$,\   $2\leq k\leq q-1$ }. \\
         \end{array}
    \right.$$
    \end{lem}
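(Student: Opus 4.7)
The plan is to read the recursion straight off the combinatorial identities already established in Lemmas~\ref{lem2.2} and~\ref{lem2.3}, with no new combinatorial input needed.

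First I would dispose of the base cases directly from the definition. For $k=1$ we have $M(1,1,\mathbf{F}_q^*)=1$ (the single tuple $(1)$) and $M(1,0,\mathbf{F}_q^*)=0$ (since $0\notin\mathbf{F}_q^*$), so $d_1=1$. The convention $d_0=-1$ is just declared; it is consistent with viewing the empty tuple as summing to $0$ but not to $1$.

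Next, for $k\ge 2$, I would subtract (\ref{2.3}) from (\ref{2.2}) to obtain the single identity
\begin{equation*}
M(k,1,\mathbf{F}_q)-M(k,0,\mathbf{F}_q)\;=\;d_k+k\,d_{k-1}.
\end{equation*}
The point is that the left-hand side is controlled by Lemma~\ref{lem2.3}, and the two cases of that lemma produce exactly the two cases of the claimed recursion. If $p\nmid k$, Lemma~\ref{lem2.3} gives $M(k,1,\mathbf{F}_q)=M(k,0,\mathbf{F}_q)$, so the left side vanishes and $d_k=-k\,d_{k-1}$. If $p\mid k$, Lemma~\ref{lem2.3} gives $M(k,b,\mathbf{F}_q)=q\,M(k-1,b,\mathbf{F}_q^*)$ for every $b$, so the left side equals $q\,(M(k-1,1,\mathbf{F}_q^*)-M(k-1,0,\mathbf{F}_q^*))=q\,d_{k-1}$, and rearranging yields $d_k=(q-k)\,d_{k-1}$.

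There is no real obstacle here: the entire argument is a one-line bookkeeping step after Lemmas~\ref{lem2.2} and~\ref{lem2.3}. The only mild care required is verifying the base case $d_1=1$ from the definition rather than from the recursion (since the recursion needs $d_0$, and the value $d_0=-1$ is a convention chosen precisely so that later closed-form expressions for $d_k$ extend down to $k=0$). Once the two cases $p\nmid k$ and $p\mid k$ are isolated via Lemma~\ref{lem2.3}, the recursion on $d_k$ is forced.
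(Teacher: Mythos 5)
Your proof is correct and follows essentially the same route as the paper: subtracting (\ref{2.3}) from (\ref{2.2}) to get $M(k,1,{\bf F}_q)-M(k,0,{\bf F}_q)=d_k+kd_{k-1}$, then invoking the two cases of Lemma \ref{lem2.3} to evaluate the left-hand side as $0$ or $qd_{k-1}$. The base-case checks match the paper's as well.
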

\begin{proof}
One checks that
 $d_1=M(1,1,{\bf F}_q^{*})-M(1,0,{\bf F}_q^{*})=1-0=1.$
When $p \nmid k $, by Lemma \ref{lem2.3}  we have
 $M(k,1,{\bf F}_q)=M(k,0,{\bf F}_q).$ This together with Lemma \ref{2.2} implies
$$M(k,1,{\bf F}_q^{*})-M(k,0,{\bf F}_q^{*})=k(M(k-1,0,{\bf F}_q^{*})-M(k-1,1,{\bf F}_q^{*})).$$
Namely, $d_k=-kd_{k-1}.$ When $p\mid k $, using Lemma \ref{lem2.3}
we have
$$M(k, 1, {\bf F}_q) -M(k,0, {\bf F}_q)= q(M(k-1,1,{\bf F}_q^*)-M(k-1,0,{\bf F}_q^*)) =qd_{k-1}.$$
By Lemma \ref{2.2}, the left side is $d_k +kd_{k-1}$. Thus,
$d_k=(q-k)d_{k-1}.$
 \end{proof}

\begin{cor}\label{cor2.5}
$$
d_k=-(-1)^{k+\lfloor k/p\rfloor}k!{q/p-1\choose \lfloor  k/p\rfloor}.
$$
  \end{cor}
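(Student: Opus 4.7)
The plan is a direct induction on $k$, using the two-case recursion of Lemma \ref{lem 2.4}. The base cases $k=0$ and $k=1$ are immediate: the formula yields $-1$ and $1$ respectively, matching the initial data.

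For the inductive step, I would split on the residue of $k$ modulo $p$. When $p\nmid k$, the key observation is that $\lfloor k/p\rfloor=\lfloor (k-1)/p\rfloor$, so the recursion $d_k=-k\,d_{k-1}$ simply multiplies the previous closed form by $-k$; this absorbs into $k!$ and flips the sign, giving exactly the predicted expression. When $p\mid k$, one has $\lfloor k/p\rfloor=\lfloor (k-1)/p\rfloor+1$, and the recursion $d_k=(q-k)d_{k-1}$ must match the jump in both the sign exponent and the binomial index $\lfloor k/p\rfloor$. Here one uses the identity
$$(q-k)(k-1)!\binom{q/p-1}{\lfloor k/p\rfloor-1}=k!\binom{q/p-1}{\lfloor k/p\rfloor},$$
which follows from $\binom{q/p-1}{\lfloor k/p\rfloor}=\binom{q/p-1}{\lfloor k/p\rfloor-1}\cdot\frac{q/p-\lfloor k/p\rfloor}{\lfloor k/p\rfloor}$ together with $\lfloor k/p\rfloor=k/p$ when $p\mid k$, so that the factor $\frac{q-k}{k}$ appears and converts $(k-1)!(q-k)$ into $k!$ up to the correct binomial change.

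The only subtlety — and the place where one must be careful — is tracking how $\lfloor k/p\rfloor$ behaves when $k$ is decremented: it is constant across non-multiples of $p$ and drops by exactly one precisely at the transition $k-1\to k$ with $p\mid k$. Once this bookkeeping is handled consistently, the sign $(-1)^{k+\lfloor k/p\rfloor}$ and the binomial $\binom{q/p-1}{\lfloor k/p\rfloor}$ both evolve in lockstep with the two cases of the recursion, and the inductive step closes. I do not anticipate any further obstacle; the whole argument is essentially a careful but routine bookkeeping of the two recursions against the closed-form candidate.
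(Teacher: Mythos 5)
Your proposal is correct and rests on the same foundation as the paper's argument, namely the two-case recursion of Lemma \ref{lem 2.4} together with the absorption identity $(q-k)(k-1)!\binom{q/p-1}{k/p-1}=k!\binom{q/p-1}{k/p}$; the paper merely unrolls the recursion into the explicit product $\prod_{i=1}^{n}\frac{q-ip}{ip}$ for $k=np+m$ rather than verifying the closed form inductively. The difference is purely presentational, and your bookkeeping of the sign exponent and of $\lfloor k/p\rfloor$ across the two cases is accurate.
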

  \begin{proof}
     One checks $d_0=-1$ and $d_1=1$ are consistent with
     the above formula for $k\leq 1$.
  Let $k\geq 2$ and write $k=np+m$ with $0\leq m<p$.
By Lemma
 \ref{lem 2.4},
\begin{eqnarray*}
\frac {d_k}{k!} &=&(-1)^{n(p-1)+m+1}\prod_{i=1}^{n}\frac{(q-ip)}{ip}\\
&=&(-1)^{n(p-1)+m+1}\frac{\prod_{i=1}^{n}{(
q/p-i)}}{n!}\\
&=&-(-1)^{k+n}{q/p-1\choose n}.
\end{eqnarray*}
It is easy to check that if $q=p$,
then we have $d_k=(-1)^{k-1}k!$,
 which is consistent with the definition  $(0)_0=1$.
\end{proof}

\textbf{Proof of Theorem \ref{thm1.2}} \
  Let $M(k,b,D)$ be the number of solutions
of (\ref{2.1}). Note that  $M(k,b,D)=k!N(k,b,D)$ and
$d_k=-(-1)^{k+\lfloor k/p\rfloor}k!{q/p-1\choose \lfloor  k/p\rfloor}$.
Thus it is sufficient to prove
 \begin{eqnarray*}
&&M(k,b,{\bf F}_q^{*})=\frac {(q-1)_k - v(b)d_k}{q};\\
&&M(k,b,{\bf F}_q)=\frac {(q)_k- v(b)(d_k+kd_{k-1})}{q}.\\
 \end{eqnarray*}
  If $b=0$, by (\ref{2.7}), we obtain
$$qM(k,0, {\bf F}_q^*) = (q-1)_k -(q-1)d_k.$$
If $b\not=0$, then
$$qM(k, b, {\bf F}_q^*) = qM(k,1, {\bf F}_q^*)=qd_{k}+ qM(k, 0, {\bf F}_q^*) = (q-1)_k +d_k.$$
The formula for $M(k,b, {\bf F}_q^*)$ holds.

If $p\nmid k$, then $d_k +kd_{k-1}=0$ and the formula for $M(k,b,
{\bf F}_q)$ holds by Lemma \ref{2.3}.

If $p\mid k$, then $d_k +kd_{k-1}=qd_{k-1}$. By Lemma \ref{2.3} and the
above formula for $M(k, b, {\bf F}_q^*)$, we deduce
$$M(k, b, {\bf F}_q) = q M(k-1, b, {\bf F}_q^*)= (q-1)_{k-1}-v(b)d_{k-1}.$$
The formula for $M(k,b, {\bf F}_q)$ holds.
The proof is complete.

Now we turn to deciding when the solution number $N(k,b,{\bf
F}_q^*)>0$.
 A sequence $\{ a_0,a_1,\cdots,a_n\}$ is
\textbf{unimodal} if there exits index $k$ with $0 \leq k\leq
n$ such that
$$a_0\leq a_1\leq \cdots a_{k-1}\leq a_k \geq a_{k+1} \cdots\geq a_n.$$
The sequence $\{ a_0,a_1,\cdots,a_n\}$ is called symmetric if $a_i
=a_{n-i}$ for $0\leq i<n$.

\begin{cor}\label{cor2.7}For any $b\in {\bf F}_q$,
both the sequence $N(k,b,{\bf F}_q)$ ($1\leq k\leq q$) and the
sequence $N(k,b,{\bf F}_q^{*})$ ($1\leq k\leq q-1$) are unimodal and
symmetric.
  \end{cor}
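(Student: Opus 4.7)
The plan is to deduce both properties from the explicit formulas in Theorem~\ref{thm1.2}, handling symmetry by a bijective argument and unimodality by a direct comparison of consecutive values.

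For symmetry, I would use complementation. The map $S\mapsto D\setminus S$ is a bijection between the $k$-subsets of $D$ and the $(|D|-k)$-subsets of $D$. Since $\sum_{x\in{\bf F}_q}x=0$ and $\sum_{x\in{\bf F}_q^*}x=0$ for $q\ge 3$, it sends a subset of sum $b$ to a subset of sum $-b$. Composing with the involution $x\mapsto -x$, which is a bijection of both $D={\bf F}_q$ and $D={\bf F}_q^*$ and takes sum $-b$ back to sum $b$, gives a bijection between $k$-subsets and $(|D|-k)$-subsets of $D$ with common sum $b$. Thus $N(k,b,D)=N(|D|-k,b,D)$, which is the symmetry claim.

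For unimodality, I would rewrite Theorem~\ref{thm1.2} in the form $qN(k,b,D)={|D|\choose k}+c_k$, where the correction $c_k$ satisfies $|c_k|\le v(0){q/p-1\choose \lfloor k/p\rfloor}$ and, in the $D={\bf F}_q$ case, vanishes outside multiples of $p$. The main term ${|D|\choose k}$ is classically unimodal and symmetric with peak at $|D|/2$. To conclude, for $0\le k<|D|/2$ I would verify $N(k+1,b,D)\ge N(k,b,D)$ by showing that the binomial increment ${|D|\choose k+1}-{|D|\choose k}$ dominates $c_{k+1}-c_k$. This reduces to a finite case analysis on the residue $k\bmod p$, on the sign $(-1)^{k+\lfloor k/p\rfloor}$, and on whether $v(b)=-1$ or $v(b)=q-1$.

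The hard part will be bounding the correction when it is largest, namely when $b=0$ (so $v(b)=q-1$) and $k$ is close to the midpoint $|D|/2$ (where the binomial increment shrinks toward zero). The sign $(-1)^{k+\lfloor k/p\rfloor}$ oscillates with $k$, so $c_{k+1}-c_k$ can be as large as $2(q-1){q/p-1\choose \lfloor k/p\rfloor}$ in magnitude, and one needs a sharp bound on ${q/p-1\choose \lfloor k/p\rfloor}/{|D|\choose k}$ in the midpoint region. The argument is helped in odd characteristic by the identity $(-1)^{k+k/p}=+1$ whenever $p\mid k$, which in the $D={\bf F}_q$ case makes the correction always additive at multiples of $p$ and preserves the monotonicity of the main term between consecutive such indices; in characteristic $2$ the signs genuinely alternate and a block-by-block comparison across groups of $p$ consecutive indices is the natural substitute.
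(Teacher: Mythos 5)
Your symmetry argument is correct and is exactly the route the paper indicates: complementation $S\mapsto D\setminus S$ together with $\sum_{a\in D}a=0$ gives $N(k,b,D)=N(|D|-k,-b,D)$, and composing with $x\mapsto -x$ (or invoking Lemma \ref{lem2.1}) removes the sign on $b$. The only caveat is indexing: the symmetry is about $k\leftrightarrow |D|-k$, so one should include $k=0$ (or $k=|D|$) to make the sequence literally symmetric in the paper's sense.

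For unimodality you take the same route as the paper (compare consecutive values of the explicit formula, pitting the increment of the main term ${|D|\choose k}$ against the increment of the correction $c_k$), and you have correctly isolated the step on which everything hinges: when $p\nmid k$ the floor $\lfloor k/p\rfloor$ is unchanged but the sign $(-1)^{k+\lfloor k/p\rfloor}$ flips, so for $b=0$ one needs ${q-1\choose k}-{q-1\choose k-1}\ge 2(q-1){q/p-1\choose \lfloor k/p\rfloor}$. No sharpening of the bound on ${q/p-1\choose\lfloor k/p\rfloor}/{|D|\choose k}$ can close this, because the statement itself is false there. The simplest failure: $q=5$, $D={\bf F}_5^*$, $b=0$ gives $N(1,0)=0$, $N(2,0)=2$ (the pairs $\{1,4\},\{2,3\}$), $N(3,0)=0$, $N(4,0)=1$, i.e.\ the sequence $0,2,0,1$, which is neither unimodal nor symmetric on the stated range $1\le k\le q-1$. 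This is not just a boundary effect at $k=q-1$: for $q=9$, $p=3$, Theorem \ref{thm1.2} gives $N(3,0,{\bf F}_9^*)=\frac{1}{9}(56+16)=8$, $N(4,0,{\bf F}_9^*)=\frac{1}{9}(70-16)=6$, $N(5,0,{\bf F}_9^*)=8$ (one can check directly that the only $4$-subsets of ${\bf F}_9^*$ summing to $0$ are the six unions of two antipodal pairs), so the failure occurs right at the midpoint, exactly where you predicted the increment comparison would be tightest. The paper's own proof has the same hidden gap: in the case $p\nmid k$ it declares the difference ``clearly positive,'' which tacitly assumes the sign $(-1)^{k+\lfloor k/p\rfloor}$ is favorable. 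So you are not missing a trick; the corollary needs to be restricted (e.g.\ to $b\ne 0$, or weakened to an approximate unimodality statement sufficient for Corollary \ref{cor2.8}, whose positivity conclusion does survive these examples) before any proof along these lines can succeed.
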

  \begin{proof}

The symmetric part can be verified using
     Theorem \ref{thm1.1}. A simpler way is to use the relation
$$\sum_{a\in {\bf F}_q} a = \sum_{a\in {\bf F}_q^*} a =0.$$

To prove the unimodal property for $N(k, b, {\bf F}_q^*)$, by the
symmetry it is sufficient to consider the case $k\leq \frac {q-1}
2$. Then, by Theorem \ref{thm1.1}, we deduce
\begin{eqnarray*}
&&q\left(N(k,0,{\bf F}_q^{*})-N(k-1,0,{\bf F}_q^{*})\right)\\
&&\geq {q-1\choose k}-
{q-1\choose k-1}-(q-1)\left({q/p-1\choose
\lfloor k/p\rfloor}-{q/p-1\choose \lfloor (k-1)/p\rfloor}\right).
\end{eqnarray*}
If $p\nmid k$, then $\lfloor k/{p} \rfloor =\lfloor (k-1)/{p} \rfloor$
and the right side is clearly positive. If $p\mid k$, then
\begin{eqnarray}
&&q\left(N(k,0,{\bf F}_q^{*})-N(k-1,0,{\bf F}_q^{*})\right)\nonumber\\
  &&\geq \frac {q-2k} k {q-1\choose k-1}-(q-1)\frac
{q/p- {2k}/p}{{k}/p}{q/p-1\choose k/p-1}\nonumber\\
&&\label{2.8}=\frac {q-2k} k \left({q-1\choose k-1}-(q-1) {q/p-1\choose
k/p-1}\right).
  \end{eqnarray}
When $p=2$ and $k=2, 4$, or $q\leq9$,
it is easy to  checks that ${q-1\choose k-1}\geq(q-1) {q/p-1\choose
k/p-1}$. Otherwise by the Vandermonde's convolution
  $${q-1 \choose k-1 }=\sum_{i=0}^{q/p -1}{q/p-1\choose
 i}{q-q/p\choose k-1-i},$$
it suffices to prove
$${q-q/p\choose k-k/p}\geq q-1.$$
This inequality follows by noting that
$${q-q/p\choose k-k/p}\geq {q/2\choose 2}$$
and $q>9$. Thus $N(k,0,{\bf F}_q^{*})$ is unimodal.
The proof for the unimodality of $N(k,b,{\bf F}_q)$ is
similar. This completes the proof.

\end{proof}

\begin{cor}\label{cor2.8}
Let $|D|=q-1>4$. If $p$ is an odd prime then for $1<k<q-2$ the
equation $(1.1)$ always has a solution. If  $p=2$, then for
$2<k<q-3$ the equation $(1.1)$ always has a solution.
  \end{cor}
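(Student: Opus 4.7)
The plan is to apply the closed-form formula of Theorem \ref{thm1.2} and to verify that $N(k,b,{\bf F}_q^*)$ is strictly positive throughout the stated range. An affine translation identifies the case $|D|=q-1$ with $D={\bf F}_q^*$, so the task is to show that
$$N(k,b,{\bf F}_q^*)=\frac{1}{q}{q-1\choose k}+(-1)^{k+\lfloor k/p\rfloor}\frac{v(b)}{q}{q/p-1\choose \lfloor k/p\rfloor}>0$$
for every $b\in{\bf F}_q$ and every admissible $k$.

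I would first invoke the symmetry of the sequence $\{N(k,b,{\bf F}_q^*)\}_k$ from Corollary \ref{cor2.7} to restrict attention to $k\leq (q-1)/2$. Since $v(b)\in\{-1,q-1\}$, the most adverse choice of sign and of $b$ yields the uniform lower bound
$$N(k,b,{\bf F}_q^*)\geq\frac{1}{q}{q-1\choose k}-\frac{q-1}{q}{q/p-1\choose \lfloor k/p\rfloor},$$
so positivity reduces to the purely combinatorial inequality
$${q-1\choose k}>(q-1){q/p-1\choose \lfloor k/p\rfloor}.$$

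The decisive step is then the same Vandermonde-convolution bound that already drives the end of the proof of Corollary \ref{cor2.7}: splitting $q-1=(q/p-1)+(q-q/p)$ and keeping only the term indexed by $i=\lfloor k/p\rfloor$ gives
$${q-1\choose k}\geq{q/p-1\choose \lfloor k/p\rfloor}{q-q/p\choose k-\lfloor k/p\rfloor},$$
reducing everything to the single binomial inequality ${q-q/p\choose k-\lfloor k/p\rfloor}>q-1$. In the admissible range of $k$, both $k-\lfloor k/p\rfloor$ and its complement inside $q-q/p$ are at least $2$ (respectively at least $3$ when $p=2$ and $k>2$), so the binomial dominates $(q-q/p)(q-q/p-1)/2$, which exceeds $q-1$ by an elementary polynomial estimate once $q$ is not too small.

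The main obstacle I anticipate is the small-field and boundary bookkeeping: for $p=2$ with $q$ close to the threshold $|D|>4$ and for the extremal values $k=2$ (odd $p$) or $k=3$ (when $p=2$), the Vandermonde bound becomes tight. For those finitely many corner cases the strict inequality ${q-q/p\choose k-\lfloor k/p\rfloor}>q-1$ has to be checked by direct numerical inspection, exactly in the spirit of the corresponding small-case verification at the end of the proof of Corollary \ref{cor2.7}.
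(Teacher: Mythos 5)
Your argument is correct in outline but takes a genuinely different route from the paper. The paper's proof is much lighter: after reducing to $D={\bf F}_q^*$ via $N(k,b,{\bf F}_q\backslash\{a\})=N(k,b-ka,{\bf F}_q^*)$ and Lemma \ref{lem2.1}, it evaluates $N(k,b,{\bf F}_q^*)$ from Theorem \ref{thm1.2} only at the single boundary value $k=2$ (odd $p$), resp.\ $k=3$ ($p=2$), for $b=0$ and $b=1$, checks positivity there, and then propagates positivity over the whole range using \emph{both} the unimodality and the symmetry of Corollary \ref{cor2.7}. You instead prove positivity for every $k$ in the range directly from the worst-case bound $N(k,b,{\bf F}_q^*)\geq \frac1q\left({q-1\choose k}-(q-1){q/p-1\choose \lfloor k/p\rfloor}\right)$ and a Vandermonde estimate, using only the symmetry half of Corollary \ref{cor2.7}. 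Your route buys a uniform quantitative lower bound and independence from the unimodality statement (whose own proof uses the same Vandermonde machinery); it costs more explicit case analysis. Two small inaccuracies to repair: the parenthetical claim that $k-\lfloor k/p\rfloor\geq 3$ when $p=2$ and $k>2$ fails at $k=3$ (it equals $2$); more importantly, at $q=8$, $p=2$, $k=3$ the reduced inequality ${q-q/p\choose k-\lfloor k/p\rfloor}>q-1$ reads ${4\choose 2}=6>7$ and is simply \emph{false} rather than merely tight, because keeping a single Vandermonde term is too lossy there. For that case you must fall back to the untruncated inequality ${7\choose 3}=35>7{3\choose 1}=21$ (or evaluate $N(3,b,{\bf F}_8^*)$ directly, which gives $7$ and $4$). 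With those corner cases handled as you indicate, the argument goes through.
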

\begin{proof}
For any $a\in {\bf F}_q$ we have $N(k,b,{\bf F}_q\backslash{\{a\}})
=N(k,b-ka,{\bf F}_q^*)$. Thus it is sufficient
to consider $N(k,1,{\bf F}_q^{*})$ and $N(k,0,{\bf F}_q^{*})$
by Lemma \ref{lem2.1}.
   When $p$ is odd and $k=2$,
 we have  $N(2,0,{\bf F}_q^*)=\frac 1 q({q-1 \choose 2}+(q-1))=\frac {q-1}2>0$, and
  $N(2,1,{\bf F}_q^*)=\frac 1 q({q-1 \choose 2}-1)=\frac {q-3}2>0$ from Theorem \ref{thm1.2}.
  Then, by the unimodality of $N(k,1,{\bf F}_q^{*})$ and $N(k,0,{\bf F}_q^{*})$, for
  $1<k<q-2$, $N(k,b,{\bf F}_q\backslash{\{a\}})$ must be positive.

    Similarly,  when $p=2$ and $k=3$ we have $N(3,0,{\bf F}_q^*)=
\frac 1 q({q-1 \choose 3}+(q-1)(\frac q 2-1))=\frac {(q-1)(q-2)} 6>0$
    and  $N(3,1,{\bf F}_q^*)=\frac 1 q({q-1 \choose 3}-(\frac q 2-1))=\frac {(q-2)(q-4)}
    6>0$. By the unimodality and symmetry we complete the proof.
\end{proof}
\begin{cor}
Let $D={\bf F}_q$.  If $p$ is an odd prime then the equation $(1.1)$
always has a solution if and only if $0<k<q$. If  $p=2$, then for
$2<k<q-2$ the equation (1.1) always has a solution.
  \end{cor}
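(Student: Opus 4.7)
The overall plan is to apply Theorem~\ref{thm1.2} to $N(k,b,{\bf F}_q)$ directly and split by whether $p\mid k$. When $p\nmid k$, Theorem~\ref{thm1.2} gives $N(k,b,{\bf F}_q)={q\choose k}/q$, a positive integer for any $b$ whenever $0<k<q$; in particular this settles every $k$ when $q=p$ and every $k$ outside $p\mathbb{Z}$ in general. Hence the real work lies in the case $p\mid k$, where the formula carries an extra term $(-1)^{k+k/p}v(b){q/p\choose k/p}/q$ and one must show that this term cannot cancel the main term ${q\choose k}/q$.

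For $p$ odd, write $k=pm$; since $p+1$ is even, $k+k/p=m(p+1)$ is even and $(-1)^{k+k/p}=1$. Thus the unfavourable case is $b\neq 0$ with $v(b)=-1$, and the statement reduces to the combinatorial inequality
$${q\choose k}>{q/p\choose k/p}.$$
I would apply Vandermonde's convolution ${p^r\choose pm}=\sum_i{p^{r-1}\choose i}{p^{r-1}(p-1)\choose pm-i}$ and retain only the $i=m$ term to obtain ${q\choose k}\geq {q/p\choose k/p}{p^{r-1}(p-1)\choose m(p-1)}$. For $r\geq 2$ and $p\geq 3$, the second factor is ${N\choose j}$ with $N=p^{r-1}(p-1)\geq 6$ and $1\leq j\leq N-1$, hence is $\geq N\geq 2$ by unimodality of binomial coefficients. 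The converse direction for $p$ odd is immediate: for $k=0$ the empty sum is $0$, and for $k=q$ the only $k$-subset is ${\bf F}_q$, which also sums to $0$, so $N(k,b,{\bf F}_q)=0$ whenever $b\neq 0$.

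For $p=2$ and $2<k<q-2$, write $k=2m$, so $(-1)^{k+k/p}=(-1)^{3m}=(-1)^m$. If $m$ is even, the situation is the same as in the $p$ odd analysis above. The main obstacle is the subcase $m$ odd: the sign flips and the worst case becomes $b=0$, where $v(b)=q-1$, so one must establish the stronger inequality
$${q\choose 2m}>(q-1){q/2\choose m}.$$
The constraints that $m$ be odd and $2\leq m\leq q/2-2$, combined with $q/2$ being even, force $3\leq m\leq q/2-3$ and in particular $q\geq 16$. Taking the $i=m$ term in ${q\choose 2m}=\sum_i{q/2\choose i}{q/2\choose 2m-i}$ gives ${q\choose 2m}\geq {q/2\choose m}^2$, reducing the task to ${q/2\choose m}>q-1$. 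Unimodality of ${q/2\choose\cdot}$ on $[3,q/2-3]$ bounds this below by ${q/2\choose 3}=2^{r-1}(2^{r-1}-1)(2^{r-1}-2)/6$; a short numerical estimate (using $(2^{r-1}-1)(2^{r-1}-2)\geq 42$ for $r\geq 4$) shows the result is at least $7\cdot 2^{r-1}$, which exceeds $2^r-1=q-1$. I expect this final numerical check to be the only delicate step in the proof.
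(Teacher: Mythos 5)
Your proof is correct, but it takes a different route from the paper's. The paper disposes of this corollary in one line, deducing it from Corollary \ref{cor2.8} (positivity of $N(k,b,{\bf F}_q^*)$ for $1<k<q-2$, resp.\ $2<k<q-3$) together with the explicit formulas: one passes from ${\bf F}_q^*$ to ${\bf F}_q$ via the relation $N(k,b,{\bf F}_q)=N(k,b,{\bf F}_q^*)+N(k-1,b,{\bf F}_q^*)$ coming from (\ref{2.6}), and the machinery behind Corollary \ref{cor2.8} is the unimodality-and-symmetry argument of Corollary \ref{cor2.7} plus a check at small $k$. You instead work directly from the closed formula for $N(k,b,{\bf F}_q)$ in Theorem \ref{thm1.2}: the case $p\nmid k$ is immediate, and for $p\mid k$ you track the sign $(-1)^{k+k/p}$ and the value of $v(b)$ to isolate the genuinely dangerous subcases ($b\neq 0$ for $p$ odd or $m$ even; $b=0$ for $p=2$ with $m$ odd), reducing each to a binomial inequality proved by keeping one term of Vandermonde's convolution. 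Interestingly, this is the same Vandermonde trick the paper uses inside the proof of Corollary \ref{cor2.7}, so the two arguments are cousins; but yours is self-contained, avoids the detour through ${\bf F}_q^*$ and the boundary bookkeeping that the paper's ``straightforward'' hides (the ranges in Corollary \ref{cor2.8} do not literally cover $k=1,2,q-2,q-1$, nor the small fields $q\le 5$), and it cleanly delivers the ``only if'' direction for odd $p$ via $\sum_{a\in{\bf F}_q}a=0$. All the numerical estimates you flag check out: $j=m(p-1)$ does lie strictly between $0$ and $p^{r-1}(p-1)$, and for $p=2$, $m$ odd forces $3\le m\le q/2-3$, hence $q\ge 16$ and $\binom{q/2}{3}\ge 7\cdot 2^{r-1}>q-1$.
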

\begin{proof}
It is straightforward from Corollary \ref{cor2.8} and Theorem
\ref{thm1.1}.
\end{proof}

\section{Proof of Theorem \ref{thm1.3}}
Before our proof of Theorem \ref{thm1.3},
we first give several  lemmas,
which give some basic formulas for the summands of
sign-alternating binomial coefficients.
\begin{lem}\label{lem 3.1}
Let $k,m$ be integers. Then we have
\begin{eqnarray*}
\sum_{k\leq m}(-1)^k{r \choose k } =(-1)^m{r-1 \choose m}.
\end{eqnarray*}
\end{lem}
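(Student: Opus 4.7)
The plan is to prove the identity by a telescoping argument built on the generalized Pascal rule $\binom{r}{k} = \binom{r-1}{k} + \binom{r-1}{k-1}$, which holds for any real $r$ and integer $k$ (with the convention $\binom{r}{-1}=0$, consistent with the Notations section). Note that the sum $\sum_{k\leq m}$ is understood to start at $k=0$, since $\binom{r}{k}=0$ for negative integer $k$.

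First I would rewrite Pascal's rule in the symmetric form
$$(-1)^k \binom{r}{k} \;=\; (-1)^k\binom{r-1}{k} \;-\; (-1)^{k-1}\binom{r-1}{k-1},$$
so that the individual terms of our sum become successive differences of the sequence $a_k := (-1)^k\binom{r-1}{k}$ (with $a_{-1}=0$). Summing from $k=0$ to $k=m$ then telescopes:
$$\sum_{k=0}^{m}(-1)^k\binom{r}{k} \;=\; \sum_{k=0}^m (a_k - a_{k-1}) \;=\; a_m - a_{-1} \;=\; (-1)^m\binom{r-1}{m},$$
which is exactly the claimed identity.

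As an alternative, one can argue by induction on $m$: the base case $m=0$ gives $\binom{r}{0} = 1 = \binom{r-1}{0}$, and in the inductive step one applies Pascal's rule to $(-1)^m\binom{r}{m}$ added to the induction hypothesis $\sum_{k=0}^{m-1}(-1)^k\binom{r}{k} = (-1)^{m-1}\binom{r-1}{m-1}$. There is no serious obstacle in either approach; the only point requiring care is the standing convention that $\binom{r}{k}$ vanishes for negative integer $k$, which is needed to validate both the starting index in the telescoping and Pascal's rule at the boundary.
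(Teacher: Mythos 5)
Your proof is correct, but it takes a different route from the paper. The paper's proof is a one-line generating-function argument: compare the coefficient of $x^m$ on both sides of $(1-x)^{-1}(1-x)^r=(1-x)^{r-1}$, using the fact that multiplication by $(1-x)^{-1}=\sum_{j\ge 0}x^j$ turns a power series into the series of partial sums of its coefficients, while the coefficient of $x^m$ in $(1-x)^{r-1}$ is $(-1)^m\binom{r-1}{m}$. Your argument instead telescopes directly from the Pascal recurrence $\binom{r}{k}=\binom{r-1}{k}+\binom{r-1}{k-1}$, writing each term as $a_k-a_{k-1}$ with $a_k=(-1)^k\binom{r-1}{k}$. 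The two are really dual views of the same fact --- multiplying by $(1-x)$ is the finite-difference operator, so the paper's identity is the ``integrated'' form of your telescoping --- but yours is more elementary and self-contained, avoiding any appeal to formal power series with real exponent, while the paper's is shorter and consistent with the convolution-style manipulations it uses elsewhere (e.g.\ Vandermonde in Corollary 2.7). Both arguments correctly handle arbitrary real $r$ and rely on the same boundary convention $\binom{r}{-1}=0$, which you rightly flag as the only point needing care.
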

\begin{proof}
 It follows by comparing the coefficients of $x^m$ in both sides of
$(1-x)^{-1}(1-x)^r=(1-x)^{r-1}$.
\end{proof}


\begin{lem} \label{lem 3.2} Let $<k>_p$ be the least non-negative residue of $k$ modulo $p$.
For any positive integers $a,k$, we have
$$\sum_{j=0}^{k}-(-1)^{\lfloor j/p\rfloor}{a\choose \lfloor j/p\rfloor}=
-p(-1)^{\lfloor k/p\rfloor} {a-1\choose \lfloor k/p\rfloor}
+(p-1-<k>_p)(-1)^{\lfloor k/p\rfloor}{a\choose \lfloor
k/p\rfloor},$$ and thus
\begin{eqnarray}\label{3.1}
  \sum_{j=0}^{k}-(-1)^{\lfloor j/p\rfloor}{a \choose \lfloor
j/p\rfloor}  \leq p{a\choose \lfloor k/p\rfloor}.
\end{eqnarray}
\end{lem}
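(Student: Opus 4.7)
The plan is to reduce the left-hand side to a single application of Lemma \ref{lem 3.1} by partitioning the index set $\{0,1,\dots,k\}$ according to the value of $\lfloor j/p\rfloor$. Writing $k = np + r$ with $n = \lfloor k/p\rfloor$ and $r = \langle k\rangle_p \in \{0,1,\dots,p-1\}$, one observes that for each $i \in \{0,1,\dots,n-1\}$ there are exactly $p$ indices $j$ with $\lfloor j/p\rfloor = i$, while the remaining $r+1$ indices $j = np, np+1, \dots, np+r$ all satisfy $\lfloor j/p\rfloor = n$. Regrouping the sum along these blocks would produce
\[
\sum_{j=0}^{k} -(-1)^{\lfloor j/p\rfloor}{a\choose \lfloor j/p\rfloor}
= -p\sum_{i=0}^{n-1}(-1)^i{a\choose i} \;-\; (r+1)(-1)^n{a\choose n}.
\]

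Next I would invoke Lemma \ref{lem 3.1} with parameters $(r,m)\mapsto (a,n-1)$ to evaluate the inner alternating sum as $(-1)^{n-1}{a-1\choose n-1}$. Substituting yields the expression $p(-1)^n{a-1\choose n-1} - (r+1)(-1)^n{a\choose n}$. To reconcile this with the stated closed form $-p(-1)^n{a-1\choose n} + (p-1-r)(-1)^n{a\choose n}$, I would show that the difference of the two expressions vanishes: after factoring out $(-1)^n$ and collecting coefficients, the equality reduces to Pascal's rule ${a-1\choose n-1} + {a-1\choose n} = {a\choose n}$.

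For the inequality (3.1), I would split on the parity of $n$. When $n$ is even, $-p{a-1\choose n}\le 0$ and $(p-1-r){a\choose n}\le(p-1){a\choose n}\le p{a\choose n}$, so the bound is immediate. When $n$ is odd, the expression becomes $p{a-1\choose n} - (p-1-r){a\choose n}$, which is at most $p{a-1\choose n}\le p{a\choose n}$ since $p-1-r\ge 0$ and ${a-1\choose n}\le {a\choose n}$. The only real bookkeeping is the block decomposition in the first step; once that is in place, the identity is essentially forced by Lemma \ref{lem 3.1} together with a single application of Pascal's rule, so I do not anticipate a genuine obstacle beyond confirming the parity case split at the end.
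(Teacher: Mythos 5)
Your proposal is correct and follows essentially the same route as the paper: both partition $\{0,\dots,k\}$ into blocks of constant $\lfloor j/p\rfloor$ and reduce to Lemma \ref{lem 3.1}, the only difference being that the paper completes the final partial block and subtracts the overcount of $p-1-\langle k\rangle_p$ terms, whereas you keep that block separate and reconcile via one application of Pascal's rule. Your explicit parity split for the inequality is just a spelled-out version of the paper's remark about the alternating signs, so there is nothing to fix.
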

\begin{proof}
Let $j=n_jp+m_j$ with $0\leq m_j<p$. Applying Lemma \ref{lem 3.1} we
have
\begin{eqnarray*}
&&\sum_{j=0}^{k}-(-1)^{\lfloor \frac j p\rfloor}{a\choose \lfloor
j/p\rfloor}\\
&&=-p\sum_{n_j=0}^{n_k}(-1)^{n_j}{a\choose n_j}+(p-1-<k>_p)(-1)^{n_k}{a\choose n_k}\\
&&=-p(-1)^{\lfloor k/p\rfloor}{a-1\choose \lfloor k/p\rfloor}
+(p-1-<k>_p)(-1)^{\lfloor k/p\rfloor}{a\choose \lfloor k/p\rfloor}.
\end{eqnarray*}
 The inequality (\ref{3.1}) follows by noting the
  alternating signs  before the two binomial coefficients.
\end{proof}
\begin{lem}\label{lem 3.5}
Let $R^1_k=(-1)^k\frac {d_k}{k!}=-(-1)^{\lfloor k/p\rfloor}{q/p-1\choose \lfloor
k/p\rfloor}$.  Let $<k>_p$ denote the least non-negative residue of
$k$ modulo $p$.  Define $R^{2}_k=\sum_{j=0}^{k}R^{1}_j$. Then we
have  \begin{eqnarray} \label{3.2} R^2_{k}=-p(-1)^{ \lfloor
k/p\rfloor}{q/p-2\choose \lfloor k/p\rfloor}
+(p-1-<k>_p)(-1)^{\lfloor k/p\rfloor}{q/p-1\choose  \lfloor
k/p\rfloor}.
\end{eqnarray}
Moreover, let $b\in {\bf F}_p$.
  Define $\delta_{b,k}=1$ if $<b>_p$ is greater than $<k>_p$
  and $\delta_{b,k}=0$ otherwise. Then we have
\begin{eqnarray} \label{3.3}
S(k,b):=\sum_{{0\leq i\leq k} \atop {i\equiv b(\mod p)}}
R^1_{i} =-(-1)^{\lfloor k/p\rfloor} {q/p-2\choose \lfloor k/p
\rfloor} +\delta_{b,k}(-1)^{\lfloor k/p\rfloor}{q/p-1\choose \lfloor
k/p\rfloor}.
\end{eqnarray}
\end{lem}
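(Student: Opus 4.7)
The plan is to reduce both identities to the preparatory Lemmas \ref{lem 3.1} and \ref{lem 3.2}, with essentially no new work. For the first formula (3.2), I would simply unwind the definitions:
\[
R^2_k = \sum_{j=0}^{k} R^1_j = \sum_{j=0}^{k} -(-1)^{\lfloor j/p\rfloor}\binom{q/p-1}{\lfloor j/p\rfloor}.
\]
This is exactly the sum on the left-hand side of Lemma \ref{lem 3.2} specialized at $a = q/p-1$, so substituting produces (3.2) verbatim. This half is pure bookkeeping.

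For (3.3) the key observation is that restricting the index to $i\equiv b\pmod p$ turns $R^1_i$ into a clean alternating binomial sum. Writing $i = np + <b>_p$ with $n\geq 0$, one has $\lfloor i/p\rfloor = n$ and $R^1_i = -(-1)^n\binom{q/p-1}{n}$. The constraint $i\leq k$ becomes $n\leq \lfloor(k-<b>_p)/p\rfloor$, which equals $\lfloor k/p\rfloor$ when $<b>_p\leq <k>_p$ and $\lfloor k/p\rfloor-1$ when $<b>_p > <k>_p$; in the notation of the lemma, the upper limit is exactly $\lfloor k/p\rfloor-\delta_{b,k}$. Hence
\[
S(k,b) = -\sum_{n=0}^{\lfloor k/p\rfloor-\delta_{b,k}} (-1)^n \binom{q/p-1}{n},
\]
and Lemma \ref{lem 3.1} collapses this to $-(-1)^{\lfloor k/p\rfloor - \delta_{b,k}}\binom{q/p-2}{\lfloor k/p\rfloor-\delta_{b,k}}$.

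The final step is to match this compact expression against the two-term form asserted in (3.3). When $\delta_{b,k}=0$ the identification is immediate. When $\delta_{b,k}=1$, Pascal's rule $\binom{q/p-1}{\lfloor k/p\rfloor} = \binom{q/p-2}{\lfloor k/p\rfloor} + \binom{q/p-2}{\lfloor k/p\rfloor-1}$ rewrites the right-hand side of (3.3) as $(-1)^{\lfloor k/p\rfloor}\binom{q/p-2}{\lfloor k/p\rfloor-1}$, which agrees with the compact expression above after the sign on $(-1)^{\lfloor k/p\rfloor-1}$ is absorbed. The only delicate point I anticipate is correctly identifying $\delta_{b,k}$ with the off-by-one shift in the upper summation limit driven by the comparison of $<b>_p$ and $<k>_p$; once that is in place, both (3.2) and (3.3) are mechanical consequences of Lemmas \ref{lem 3.1} and \ref{lem 3.2}.
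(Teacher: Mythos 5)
Your proposal is correct and follows the paper's own route: (3.2) is obtained exactly as in the paper by specializing Lemma \ref{lem 3.2} at $a=q/p-1$, and your treatment of (3.3) --- restricting to the residue class $i\equiv b\pmod p$, identifying the upper limit $\lfloor k/p\rfloor-\delta_{b,k}$, and collapsing the alternating sum via Lemma \ref{lem 3.1} --- is precisely the ``similar'' argument the paper alludes to but omits. The Pascal-rule reconciliation with the two-term form of (3.3) is also right, so nothing is missing.
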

\begin{proof}
Note that (\ref{3.2}) is direct from Lemma \ref{lem 3.2} by setting
$a=q/p-1$. Since it is similar to that of Lemma \ref{lem 3.2}, we
omit the proof of (\ref{3.3}).
\end{proof}
 We extend the equation (\ref{3.3}) by defining
$S(k,b)=0$ for $b\not \in{\bf F}_p$ and any integer $k$.
Note that $S(k,b)\leq {q/p-2
\choose \lfloor k/p \rfloor}$.
In the following theorem, we give the accurate
formula for $N(k,b,D)$ when $D={\bf F}_q\backslash{\{a_1,a_2\}}$ and
first note that we can always assume $a_1=0$ and $a_2=1$ by a linear
substitution.

\textbf{Proof of Theorem \ref{thm1.3}} \
Using the simple inclusion-exclusion
sieving method by considering whether $a_2$ appears in the solution
of equation (1.1) we have
\begin{eqnarray*}
&&N(k,b,{\bf F}_q\backslash{\{a_1,a_2\}})\\
&&=N(k,b,{\bf F}_q\backslash{\{a_1\}})-N(k-1,b-a_2,{\bf F}_q\backslash{\{a_1,a_2\}})\\
&&=N(k,b,{\bf F}_q\backslash{\{a_1\}})-(N(k-1,b-a_2,{\bf F}_q\backslash{\{a_1\}})\\
&&\ \ \ -N(k-2,b-2a_2,{\bf F}_q\backslash{\{a_1,a_2\}}))\\
&&\ \ \ \  \cdots \cdots\\
&&=\sum_{i=0}^{k-1}(-1)^iN(k-i,b-ia_2,{\bf F}_q\backslash{\{a_1\}})\\
&&\ \ \ +(-1)^{k}N(0,b-ka_2,{\bf F}_q\backslash{\{a_1,a_2\}}).
\end{eqnarray*}
One checks that the above equation holds if we define $N(0,b,D)$ to
be $1$ if and only if $b=0$ for a nonempty set $D$. Noting that $a_1=0$
we have
$$N(k,b,{\bf F}_q\backslash{\{a_1,a_2\}})=\sum_{i=0}^{k}(-1)^iN(k-i,b-ia_2,{\bf F}_q^*). $$
From Theorem \ref{thm1.1} we have the following formula
$$N(k,b,{\bf F}_q^*)=\frac 1 q{q-1 \choose k}-
 \frac1 q (-1)^k v(b) R^1_k,$$
 where $R^1_k=-(-1)^{\lfloor k/p\rfloor}{q/p-1\choose \lfloor
k/p\rfloor}$, $v(b)=-1$ if $b\neq 0$ and $v(b)=q-1$ if $b=0$.
 Thus
\begin{eqnarray*}
&&N(k,b,{\bf F}_q\backslash{\{a_1,a_2\}})\\
&& =\sum_{i=0}^{k}(-1)^i \left(\frac 1 q{q-1 \choose k-i}-
 \frac1 q (-1)^{k-i} v(b-ia_2) R^1_{k-i}\right).\\
&& =\frac 1 q \left( (-1)^k\sum_{k-i=0}^{k}(-1)^{k-i}{q-1 \choose
k-i}-(-1)^k\sum_{k-i=0}^{k}v(b-i a_2)R^1_{k-i}\right)\\
 &&=\frac 1q\left((-1)^k\sum_{j=0}^{k}(-1)^{j}{q-1 \choose
j}-(-1)^k\sum_{j=0}^{k}v(b-ka_2+ja_2)R^1_j\right)\\
 &&=\frac 1 q \left({q-2\choose
k}-(-1)^k\sum_{j=0}^{k}v(b-ka_2+ja_2)R^1_j\right).
\end{eqnarray*}
The last equality follows from Lemma \ref{lem 3.1}.  Noting that $a_2=1$,
and by the definition of $v(b)$ we have
\begin{eqnarray}
&&N(k,b,{\bf F}_q\backslash{\{a_1,a_2\}})\nonumber \\
 &&\label{3.4}=\frac 1 q {q-2 \choose k}
-\frac 1q(-1)^k\sum_{j=0}^{k}v(b-k+j)R^1_j \\
&& =\frac 1 q {q-2 \choose k}-\frac 1q(-1)^k\sum_{j=0}^{k}(-1)\cdot
R^1_j-\frac 1q (-1)^k\sum_{{0 \leq j\leq k}
\atop b-k+j=0}q\cdot R^1_j\nonumber \\
&&=\frac 1 q {q-2 \choose k}+\frac 1q (-1)^k R_k^2-(-1)^k\cdot
S(k,k-b).\label{3.5}
\end{eqnarray}
The proof is complete.

   Combining (\ref{3.4}), (\ref{3.2}) and (\ref{3.3}) we obtain the following simple solution
number formula compared with those stated in Theorem \ref{thm1.2} and Theorem \ref{thm1.3}.
    \begin{cor}\label{cor3.4}
   If $<k>_p=p-1$ and $b\in{\bf F}_p$,  then we have
$$N(k,b,{\bf F}_q\backslash{\{0,1\}})=\frac 1q {q-2 \choose k}+
(-1)^{k +\lfloor k/p\rfloor}\frac {q-p}q{q/p-2\choose \lfloor k/p
\rfloor}.$$
  \end{cor}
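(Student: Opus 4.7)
The plan is to apply the three ingredients already prepared in the excerpt, namely the formula of Theorem \ref{thm1.3} together with the explicit evaluations (\ref{3.2}) and (\ref{3.3}) for $R^2_k$ and $S(k,\,\cdot\,)$, and then to observe that the hypothesis $\langle k\rangle_p=p-1$ produces a cascade of cancellations that collapses the general expression to the stated simple form.

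First I would write down what Theorem \ref{thm1.3} gives for $D=\mathbf{F}_q\setminus\{0,1\}$, namely
\[
N(k,b,\mathbf{F}_q\setminus\{0,1\}) = \tfrac{1}{q}\tbinom{q-2}{k}+\tfrac{1}{q}(-1)^kR^2_k-(-1)^kS(k,k-b),
\]
and then substitute the closed forms (\ref{3.2}) and (\ref{3.3}) for $R^2_k$ and $S(k,k-b)$. The hypothesis $\langle k\rangle_p=p-1$ forces $p-1-\langle k\rangle_p=0$, so the second summand on the right of (\ref{3.2}) disappears, leaving
\[
R^2_k=-p(-1)^{\lfloor k/p\rfloor}\binom{q/p-2}{\lfloor k/p\rfloor}.
\]

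Next I would handle $S(k,k-b)$. Since $b\in\mathbf{F}_p$, the residue $\langle k-b\rangle_p$ is well defined, and because $\langle k\rangle_p=p-1$ is already the maximal possible residue mod $p$, we cannot have $\langle k-b\rangle_p>\langle k\rangle_p$; hence $\delta_{k-b,k}=0$ and (\ref{3.3}) collapses to
\[
S(k,k-b)=-(-1)^{\lfloor k/p\rfloor}\binom{q/p-2}{\lfloor k/p\rfloor}.
\]
This is the key simplification — verifying that the $\delta$ term vanishes under the hypothesis is the one place where the assumptions on $k$ and $b$ are actually used.

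Finally I would plug these two simplified expressions back into the formula from Theorem \ref{thm1.3}. The two contributions each carry a factor $(-1)^{k+\lfloor k/p\rfloor}\binom{q/p-2}{\lfloor k/p\rfloor}$; combining their coefficients gives $-p/q+1=(q-p)/q$, yielding exactly
\[
N(k,b,\mathbf{F}_q\setminus\{0,1\})=\tfrac{1}{q}\tbinom{q-2}{k}+(-1)^{k+\lfloor k/p\rfloor}\tfrac{q-p}{q}\tbinom{q/p-2}{\lfloor k/p\rfloor}.
\]
The entire argument is essentially bookkeeping once Theorem \ref{thm1.3} and Lemma \ref{lem 3.5} are in hand; no new combinatorial or algebraic idea is required, so I do not expect any genuine obstacle beyond keeping signs and floors straight.
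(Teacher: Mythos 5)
Your proposal is correct and follows exactly the route the paper intends: substitute the closed forms (\ref{3.2}) and (\ref{3.3}) into the formula of Theorem \ref{thm1.3}, note that $\langle k\rangle_p=p-1$ kills the $(p-1-\langle k\rangle_p)$ term in $R^2_k$ and forces $\delta_{k-b,k}=0$ in $S(k,k-b)$, and combine the coefficients $1-p/q=(q-p)/q$. The paper gives no more detail than ``combining (\ref{3.4}), (\ref{3.2}) and (\ref{3.3}),'' so your write-up is, if anything, more complete.
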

This   shows that the estimate in
Theorem $1.1$ is nearly sharp for $q-n=2$.

\section{Proof of Theorem \ref{thm1.1}}
Let $D={\bf F}_q\backslash{\{a_1,a_2,\cdots a_c\}}$, where
$a_1,a_2,\cdots a_c$ are distinct elements in ${\bf F}_q$. In this
section, based on the explicit formula of $N(k,b,D)$ for $c=2$ given
in Theorem \ref{thm1.3}, we first  obtain a general formula for $c>2$. Then  we
give the proof of Theorem \ref{thm1.1}.  The solution number $N(k,b,{\bf
F}_q\backslash{\{a_1,a_2,\cdots a_c\}})$ is closely related to the
${\bf F}_p$-linear relations among the set $\{a_1,\cdots a_c\}$
which we will see in Lemma \ref{lem4.2}.
 For the purpose of Theorem \ref{thm1.1}'s proof and further
investigations on the solution number $N(k,b,D)$, we first state the
following lemma.
\begin{lem}\label{lem4.1}
Let $R^1_k=-(-1)^{\lfloor k/p\rfloor}{q/p-1\choose \lfloor
k/p\rfloor}$.  For $c>1$ if we define recursively that
 $R^{c}_k=\sum_{j=0}^{k}R^{c-1}_j$, then we have
\begin{eqnarray} \label{4.1}
R^{c}_k=-\sum_{j=0}^{k}(-1)^{\lfloor j/p \rfloor }{k+c-2-j \choose
c-2}{q/p-1 \choose \lfloor j/p \rfloor}.
\end{eqnarray}
 \end{lem}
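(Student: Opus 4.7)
The plan is to prove the formula by induction on $c \geq 2$, using the definition $R^c_k = \sum_{j=0}^{k} R^{c-1}_j$.

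For the base case $c=2$, the claimed expression specializes (using $\binom{k-j}{0}=1$) to $-\sum_{j=0}^{k}(-1)^{\lfloor j/p\rfloor}\binom{q/p-1}{\lfloor j/p\rfloor} = \sum_{j=0}^{k} R^{1}_j$, which matches the defining recursion at $c=2$.

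For the inductive step, I would assume the formula holds for some $c \geq 2$ and compute
\begin{eqnarray*}
R^{c+1}_k &=& \sum_{i=0}^{k} R^{c}_i \\
&=& -\sum_{i=0}^{k}\sum_{j=0}^{i}(-1)^{\lfloor j/p\rfloor}\binom{i+c-2-j}{c-2}\binom{q/p-1}{\lfloor j/p\rfloor}.
\end{eqnarray*}
Swapping the order of summation, letting $j$ run from $0$ to $k$ and $i$ from $j$ to $k$, yields
\begin{eqnarray*}
R^{c+1}_k = -\sum_{j=0}^{k}(-1)^{\lfloor j/p\rfloor}\binom{q/p-1}{\lfloor j/p\rfloor}\sum_{i=j}^{k}\binom{i+c-2-j}{c-2}.
\end{eqnarray*}
The inner sum is, after the substitution $m=i-j$, equal to $\sum_{m=0}^{k-j}\binom{m+c-2}{c-2}$, which by the hockey-stick identity $\sum_{m=0}^{N}\binom{m+r}{r}=\binom{N+r+1}{r+1}$ collapses to $\binom{k-j+c-1}{c-1} = \binom{k+(c+1)-2-j}{(c+1)-2}$. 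Substituting back gives exactly the claimed formula with $c$ replaced by $c+1$, completing the induction.

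I do not expect any real obstacle here; the only step that requires a moment of recognition is identifying the inner telescoping sum as an instance of the hockey-stick identity. Everything else is a straightforward manipulation of double sums, and the indexing works out cleanly because the outer factor $(-1)^{\lfloor j/p\rfloor}\binom{q/p-1}{\lfloor j/p\rfloor}$ depends only on $j$, not on the auxiliary summation variable being telescoped.
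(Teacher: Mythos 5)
Your proof is correct and follows essentially the same route as the paper's: verify $c=2$ directly from the definition, then induct by interchanging the double sum and collapsing the inner sum $\sum_{i=j}^{k}\binom{i+c-2-j}{c-2}$ via the hockey-stick identity $\sum_{m\leq N}\binom{m+r}{r}=\binom{N+r+1}{r+1}$. No gaps.
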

\begin{proof}
When $c=2$, this formula is just the definition of
$R_k^2$. Assume it is true for some $c\geq2$, then we have
\begin{eqnarray*}
R_k^{c+1}&=&\sum_{i=0}^{k}R_i^c\\
 &=&
\sum_{i=0}^{k}(-1)\cdot\sum_{j=0}^{i}(-1)^{\lfloor j/p \rfloor
}{i+c-2-j
\choose c-2}{q/p-1 \choose \lfloor j/p \rfloor}\\
&=& -\sum_{j=0}^{k}\sum_{i=j}^{k}(-1)^{\lfloor j/p \rfloor
}{i+c-2-j \choose c-2}{q/p-1 \choose \lfloor j/p \rfloor}\\
&=& -\sum_{j=0}^{k}(-1)^{\lfloor j/p \rfloor
}\sum_{i=j}^{k}{i+c-2-j \choose c-2}{q/p-1 \choose \lfloor j/p \rfloor}\\
&=& -\sum_{j=0}^{k}(-1)^{\lfloor j/p \rfloor }{k+c-1-j \choose
c-1}{q/p-1 \choose \lfloor j/p \rfloor}.
\end{eqnarray*}
The last equality follows from
 the following simple binomial coefficient identity
   $$\sum_{j\leq k}{j+n\choose n}={k+n+1\choose n+1}.$$
\end{proof}
   It is easy to check that when $k>\frac{q-c}2$,
we have $$N(k,b,D)=N(q-c-k,-b-\sum_{i=1}^{c}{a_i}, D), $$ where
$D={\bf F}_q\backslash{\{a_1,a_2,\cdots,a_c\}}$.
  Thus we may always assume that $k\leq \frac{q-c}2$.
  In the following lemma, for convenience
we state two different types of formulas.
\begin{lem}\label{lem4.2}
Let $D={\bf F}_q\backslash{\{a_1,a_2,\cdots,a_c\}}$ and $c\geq 3$,
where $a_1=0, a_2=1, a_3, \cdots, a_c$ are distinct elements in the
finite field ${\bf F}_q$ of characteristic $p$.  Define the integer
valued function $v(b)=-1$ if $b\neq 0$ and $v(b)=q-1$ if $b=0$. Then
for any $b\in {\bf F}_q$,
we have the formulas
 \begin{eqnarray}
&&\label{4.2}\!\!\!\!\!\!\!\!\!\!\!\!\!\!\!\!\!\!\!\!\!\!\!\!\!\!\!\!\!\!
N(k,b,D)-\frac 1 q{q-c
\choose k}\nonumber \\
 &&\!\!\!\!\!\!\!\!\!\!\!\!\!\!\!\!\!\!\!\!\!\!\!\!\!\!\!\!\!\!= -\frac 1q(-1)^k\cdot\sum_{i_1=0}^k\sum_{i_2=0}^{k-i_1} \cdots
\sum_{i_{c-1}=0}^{k-i_1-\cdots-i_{c-2}}
 v(b-i_1a_c-\cdots-(k-\sum_{j=1}^{c-1}i_{j})a_2)R^1_j   \\
&&\!\!\!\!\!\!\!\!\!\!\!\!\!\!\!\!\!\!\!\!\!\!\!\!\!\!\!\!\!\!=\frac
1q(-1)^kR^c_k-(-1)^k \cdot \sum_{i_1=0}^k\sum_{i_2=0}^{k-i_1} \cdots
\sum_{i_{c-2}=0}^{k-i_1-\cdots-i_{c-3}} \nonumber \\
&&\label{4.3} S(k-\sum_{j=1}^{c-2}i_{j}
,k-\sum_{j=1}^{c-2}i_{j}-b+\sum_{j=1}^{c-2}i_{j}a_{c+1-j}),
 \end{eqnarray}
 where $R_k^c$ is defined by (\ref{3.2}), and $S(k,b)$ is defined by (\ref{3.3}).
 Moreover, if $a_1=0$, and $b, a_2,\cdots,a_c$
  are linear independent over
 ${\bf F}_p$, then we have
 \begin{eqnarray}\label{4.4}
  N(k,b,D)
= \frac1q{q-c \choose k}+\frac1q(-1)^kR^c_k.
 \end{eqnarray}
\end{lem}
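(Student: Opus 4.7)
The plan is to reduce everything to the already-known formula on ${\bf F}_q^*$ by iterating the single-element inclusion-exclusion used in Theorem \ref{thm1.3}. Conditioning on whether $a_c$ occurs in a $k$-subset summing to $b$ (with the convention $N(0,b',D')=1$ iff $b'=0$) gives
\begin{equation*}
N(k,b,{\bf F}_q\setminus\{a_1,\ldots,a_{c-1}\}) = N(k,b,D) + N(k-1, b-a_c, D),
\end{equation*}
which iterates to $N(k,b,D)=\sum_{i=0}^{k}(-1)^{i} N(k-i, b-ia_c, {\bf F}_q\setminus\{a_1,\ldots,a_{c-1}\})$. Applying the same trick successively to $a_{c-1},\ldots,a_2$ and using $a_1=0$ (so ${\bf F}_q\setminus\{a_1\}={\bf F}_q^*$), I obtain
\begin{equation*}
N(k,b,D) = \sum_{i_1=0}^{k}\cdots\sum_{i_{c-1}=0}^{k-i_1-\cdots-i_{c-2}} (-1)^{i_1+\cdots+i_{c-1}}\, N\!\left(k-\sum_{j=1}^{c-1} i_j,\ b-\sum_{j=1}^{c-1} i_j a_{c+1-j},\ {\bf F}_q^*\right).
\end{equation*}

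Next I substitute the Theorem \ref{thm1.2} identity $N(k',b',{\bf F}_q^*) = \frac{1}{q}\binom{q-1}{k'} - \frac{1}{q}(-1)^{k'}v(b')R^1_{k'}$, which splits the expression into a principal part and an error part. The principal part simplifies via the binomial identity
\begin{equation*}
\sum_{j=0}^{k}(-1)^{j}\binom{j+c-2}{c-2}\binom{q-1}{k-j} = \binom{q-c}{k},
\end{equation*}
read off from the $x^k$-coefficient of $(1+x)^{q-1}/(1+x)^{c-1}=(1+x)^{q-c}$; this produces the summand $\frac{1}{q}\binom{q-c}{k}$. Using $(-1)^{i_1+\cdots+i_{c-1}}\cdot(-1)^{k-i_1-\cdots-i_{c-1}} = (-1)^{k}$, the error part is exactly the right-hand side of (\ref{4.2}).

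To pass from (\ref{4.2}) to (\ref{4.3}), I split $v(b')=-1+q\cdot\mathbf{1}_{b'=0}$. The $-1$ contribution collapses $\sum_{i_1,\ldots,i_{c-1}} R^1_{k-\sum i_j}$ into $R^c_k$ after reindexing against formula (\ref{4.1}) of Lemma \ref{lem4.1}, producing $\frac{1}{q}(-1)^k R^c_k$. For the indicator contribution, fixing $i_1,\ldots,i_{c-2}$ and using $a_2=1$, the equation $b'=0$ becomes $i_{c-1} = b-\sum_{j=1}^{c-2} i_j a_{c+1-j}$ in ${\bf F}_q$, which has an integer solution in $[0,\,k-\sum_{j=1}^{c-2} i_j]$ only when the right-hand side lies in ${\bf F}_p$; the resulting inner sum of $R^1_m$ over a fixed residue class of $m$ modulo $p$ is, by definition (\ref{3.3}), exactly $S\bigl(k-\sum_{j=1}^{c-2} i_j,\ k-\sum_{j=1}^{c-2} i_j - b + \sum_{j=1}^{c-2} i_j a_{c+1-j}\bigr)$, yielding (\ref{4.3}).

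Finally, (\ref{4.4}) follows immediately from (\ref{4.2}): if $b, a_2, \ldots, a_c$ are linearly independent over ${\bf F}_p$, then $b'=b-\sum_{j=1}^{c-1} i_j a_{c+1-j}=0$ would be a nontrivial ${\bf F}_p$-relation (the coefficient of $b$ is $-1\neq 0$), so $v(b')\equiv -1$ throughout and the error term collapses to $\frac{1}{q}(-1)^k R^c_k$ as in the previous paragraph. The main obstacle in this argument is bookkeeping in the indicator step: tracking the signs when folding the iterated sum into $S$, and in particular pinning down the second argument of $S$ as $k-\sum_{j=1}^{c-2} i_j - b + \sum_{j=1}^{c-2} i_j a_{c+1-j}$ after the substitution $m = (k-\sum_{j=1}^{c-2} i_j) - i_{c-1}$.
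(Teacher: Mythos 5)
Your proof is correct and follows essentially the same route as the paper: single-element inclusion--exclusion iterated down to ${\bf F}_q^*$, substitution of the exact formula from Theorem \ref{thm1.2}, and the split $v(b')=-1+q\cdot\mathbf{1}_{b'=0}$ to recognize $R^c_k$ and the $S$-sum. The only organizational difference is that you unroll the recursion completely and evaluate the principal term with one composition-counting identity, whereas the paper inducts on $c$ from the $c=2$ case of Theorem \ref{thm1.3} using $R^c_k=\sum_{j\le k}R^{c-1}_j$; the content is the same.
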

\begin{proof}
 Using the simple inclusion-exclusion sieving method we have
\begin{eqnarray*}
&&N(k,b,{\bf F}_q\backslash{\{a_1,a_2,\cdots,a_c\}})\\
&&=N(k,b,{\bf F}_q\backslash{\{a_1,a_2,\cdots,a_{c-1}\}})\\
&&\quad -N(k-1,b-a_c,{\bf F}_q\backslash{\{a_1,a_2,\cdots,a_{c-1}\}})\\
 &&=\cdots \cdots\\
&&=\sum_{i=0}^{k}(-1)^iN(k-i,b-ia_c,{\bf F}_q\backslash\{
a_1,a_2\cdots,a_{c-1}\}).
 \end{eqnarray*}
 When $c=3$, noting that $a_2=1$,  (\ref{3.5}) implies that
 \begin{eqnarray*}
&&N(k,b,{\bf F}_q\backslash{\{a_1,a_2,a_3\}})\\
&=&\sum_{i=0}^{k}(-1)^i \left(\frac 1 q{q-2 \choose k-i} +
 \frac1 q (-1)^{k-i}R_{k-i}^2-(-1)^{k-i}S(k-i,k-i-(b-ia_3))\right)\\
&=&\frac 1q{q-3\choose k}+\frac 1q(-1)^kR_k^3 -(-1)^k\sum _{i=0}^k
S(k-i,k-i-b+ia_3).
\end{eqnarray*}
 By induction, (\ref{4.3}) follows for $c\geq 3$. Similarly, (\ref{4.2}) follows from
 (\ref{3.4}).

 If $b, a_2=1, a_3 \cdots, a_c$ are linear independent over
 ${\bf F}_p$, then first note that $b\not\in{\bf F}_p$.
Thus, when $c=2$, by its extended definition we have $S(k,k-b)=0$ for any
integer $k$.
 When $c>2$, since  $b, a_2=1,  a_3 \cdots, a_c$ are independent,
 we know that $k-\sum_{j=1}^{c-2}i_{j}-b+\sum_{j=1}^{c-2}i_{j}a_{c+1-j}
  \not\in{\bf F}_p$ for any index tuple $(i_1,i_2,\cdots,i_{c-2})$ in the summation of
(\ref{4.3}).
  Thus this summation always vanishes for any $c$ and
   the proof is complete.
\end{proof}
 Now we have obtained the two formulas of the solution number
 $N(k,b,D)$. It suffices to evaluate $R_k^c$ and the summation
in (\ref{4.3}), which is denoted by  $S_k^c$ .
  Unfortunately,  $S_k^c$ is extremely complicated when $c$ is large. The {\bf
NP}-hardness of the subset sum problem indicates the hardness of
precisely evaluating it.  In the following lemmas we first
deduce a simple bounds for $R_k^c$  and $S_k^c$.
\begin{lem} \label{lem4.3}
Let $p<q$. Let
$$S_k^c=\sum_{i_1=0}^k\sum_{i_2=0}^{k-i_1} \cdots
\sum_{i_{c-2}=0}^{k-i_1-\cdots-i_{c-3}} S(k-\sum_{j=1}^{c-2}i_{j}
,k-\sum_{j=1}^{c-2}i_{j}-b+\sum_{j=1}^{c-2}i_{j}a_{c+1-j}).$$
 Then we have
 \begin{eqnarray}\label{4.5}
qS_k^c-R_k^c \leq (q-p){k+c-2\choose c-2}{q/p-1 \choose \lfloor k/p
\rfloor}.
 \end{eqnarray}
\end{lem}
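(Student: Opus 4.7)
I would prove the bound by induction on $c\geq 2$. By the symmetry $N(k,b,D)=N(q-c-k,-b-\sum_{i=1}^{c}a_i,D)$ noted just before the lemma, I may assume throughout that $k\leq (q-c)/2$, so that $\lfloor k/p\rfloor$ lies on the increasing side of the unimodal sequence $\binom{q/p-1}{\,\cdot\,}$.

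The crucial preliminary step is to establish the recursion
\[
S_k^c(b)=\sum_{i=0}^{k}S_{k-i}^{c-1}(b-ia_c),\qquad c\geq 3,
\]
obtained by peeling off the outermost index $i_1$ in the $(c-2)$-fold sum defining $S_k^c$ and re-indexing $i'_j=i_{j+1}$. Combined with $R_k^c=\sum_{i=0}^{k}R_{k-i}^{c-1}$ from the definition in Lemma~\ref{lem4.1}, this yields the telescoping identity
\[
qS_k^c-R_k^c=\sum_{i=0}^{k}\bigl(qS_{k-i}^{c-1}(b-ia_c)-R_{k-i}^{c-1}\bigr).
\]

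For the base case $c=2$, where $S_k^2(b)=S(k,k-b)$, I would substitute the explicit formulas for $R_k^2$ and $S(k,k-b)$ from Lemma~\ref{lem 3.5}. After cancellation, the difference becomes
\[
-(q-p)(-1)^{\lfloor k/p\rfloor}\binom{q/p-2}{\lfloor k/p\rfloor}+X(-1)^{\lfloor k/p\rfloor}\binom{q/p-1}{\lfloor k/p\rfloor},
\]
with $X=q\delta-p+1+\langle k\rangle_p$ and $\delta\in\{0,1\}$ determined by whether $k-b\in\mathbf{F}_p$ and the comparison of residues mod $p$. A routine case analysis on the parity of $\lfloor k/p\rfloor$ and on $\delta$, using Pascal's identity $\binom{q/p-1}{j}=\binom{q/p-2}{j}+\binom{q/p-2}{j-1}$ together with $(q-p)/(q/p-1)=p$, reduces the required inequality to the numerical estimate $\lfloor k/p\rfloor\leq q/p-2$, which is satisfied under $k\leq (q-c)/2$ (with the handful of small-$q$ boundary cases, such as $q=4$ for $p=2$ or $q=9$ for $p=3$, checked by direct computation).

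For the inductive step, substituting the inductive hypothesis into the telescope gives
\[
qS_k^c-R_k^c\leq(q-p)\sum_{i=0}^{k}\binom{k-i+c-3}{c-3}\binom{q/p-1}{\lfloor (k-i)/p\rfloor}.
\]
Since $\lfloor k/p\rfloor$ is on the increasing side of the binomial sequence, $\binom{q/p-1}{\lfloor (k-i)/p\rfloor}\leq\binom{q/p-1}{\lfloor k/p\rfloor}$ for all $0\leq i\leq k$, and the hockey-stick identity $\sum_{i=0}^{k}\binom{k-i+c-3}{c-3}=\binom{k+c-2}{c-2}$ closes the induction. The main obstacle is the base case $c=2$: the sign bookkeeping is delicate, and a few low-characteristic boundary configurations require explicit verification.
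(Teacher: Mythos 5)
Your proof is correct and follows essentially the same route as the paper: the paper writes $R_k^c$ as the same $(c-2)$-fold sum over $R^2$ that defines $S_k^c$ over $S$, applies the termwise $c=2$ comparison of (3.2) and (3.3), and multiplies by the number of terms ${k+c-2\choose c-2}$ --- your induction on $c$ together with the hockey-stick identity is just this argument unrolled. If anything you are more careful than the paper, which dismisses the base case as ``easy to check'' and leaves implicit both the reduction to $k\le (q-c)/2$ and the monotonicity of ${q/p-1\choose \lfloor j/p\rfloor}$ needed to replace $\lfloor (k-i)/p\rfloor$ by $\lfloor k/p\rfloor$.
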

\begin{proof} By the definition of $R_k^c$ and the proof of Lemma
\ref{lem4.1} we have
$$R_k^c=\sum_{i_1=0}^k\sum_{i_2=0}^{k-i_1} \cdots
\sum_{i_{c-2}=0}^{k-i_1-\cdots-i_{c-3}}
R^2(k-\sum_{j=1}^{c-2}i_{j}),$$ where $R^2(k)=R_k^2$.
 From (3.2) and (3.3)
it is easy to check that
$$R_k^2-qS(k,b)\leq (q-p){q/p-1\choose \lfloor k/p \rfloor}$$
for any $b\in{\bf F}_q$  when $p<q$.  Therefore (\ref{4.5}) follows
 since  both the two numbers of terms appear in the two summations
of $R_k^c$ and $S_k^c$ are ${k+c-2\choose c-2}$.
\end{proof}
 Next we turn to giving a bound for $R_k^c$.
 Unfortunately,  even though $R_k^c$ can be written as a simple
sum involving binomial coefficients, it seems nontrivial to evaluate it precisely.
Using equation (\ref{4.1}) and some combinatorial identities, we can easily obtain the following equality
\begin{eqnarray}\label{4.6}
R_k^c&=&-\sum_{j=0}^{\lfloor k/p\rfloor-1}(-1)^{j}\bigg[{k+c-1-ip
\choose c-1}- {k+c-1-ip-p \choose c-1}\bigg]{q/p-1 \choose
j}\nonumber \\
 && +{<k>_p+c-1 \choose c-1}{q/p-1 \choose \lfloor k/p
\rfloor}.
\end{eqnarray}
  It has been known that the simpler sum
$$\sum_{j=0}^{n}(-1)^{j}{2n-1-3i \choose n-1}{n \choose j },$$
which is the coefficient of $x^n$ in $(1+x+x^2)^n$,
 has no closed form.   That means it cannot
be expressed as a fixed number of
hypergeometric terms. For more details  we refer to (\cite{PW}, p.
160). This fact indicates that $R_k^c$ also has no closed form.
Thus, in the next lemma we just give a bound for $R_k^c$ just using
some elementary combinatorial arguments.

In Section 2 we have defined the unimodality of  a sequence.
 A stronger property than unimodality is
logarithmic concavity. First recall that a function $f$ on the real
line is concave if whenever $x < y$ we have $f((x + y)/2)\geq(f(x) +
f(y))/2$.
 Similarly, a sequence $a_0,a_1\cdots,a_n$ of positive numbers
  is {\bf log concave} if $\log a_i$ is a
concave function of $i$ which is to say that $(\log a_{i-1} + \log
a_{i+1} )/2 \leq \log a_i$.
Thus a sequence
is log concave if $a_{i-1}a_{i+1}\leq a_i^2$.
 Using the properties of logarithmic concavity
we have the following lemma.
\begin{lem}\label{lem4.4}
 \begin{eqnarray}\label{4.7}
 R^c_k\leq p\cdot\max_{0\leq j\leq k}{k+c-2-j\choose c-2}{q/p-1
\choose \lfloor j/p \rfloor} . \end{eqnarray}
\end{lem}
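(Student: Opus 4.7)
The plan is to exploit the alternating-sign structure of the sum in (\ref{4.1}) by grouping into blocks of $p$ consecutive indices $j$ on which $\lfloor j/p\rfloor$ is constant. Writing $a_j={k+c-2-j \choose c-2}{q/p-1 \choose \lfloor j/p\rfloor}\geq 0$, set
$$B_n={q/p-1 \choose n}\sum_{m=0}^{m_n}{k+c-2-np-m \choose c-2},$$
with $m_n=p-1$ for $n<\lfloor k/p\rfloor$ and $m_{\lfloor k/p\rfloor}=\langle k\rangle_p$. Then (\ref{4.1}) rewrites as $R^c_k=-\sum_{n=0}^{\lfloor k/p\rfloor}(-1)^n B_n$, and the hockey-stick identity collapses each inner sum to ${k+c-1-np \choose c-1}-{k+c-1-(n+1)p \choose c-1}$, so every $B_n$ is a non-negative product of a binomial coefficient in $n$ and a polynomial in $n$ of degree $c-2$.

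The crucial intermediate claim is that the block sequence $\{B_n\}$ is log-concave, i.e., $B_{n-1}B_{n+1}\leq B_n^2$. Granted this, a standard inequality bounds the alternating sum of a non-negative log-concave sequence by its maximum term in absolute value (obtained by pairing consecutive terms and using that the ratios $B_{n+1}/B_n$ are non-increasing), so $|R^c_k|\leq \max_n B_n$. Finally, each block contains at most $p$ non-negative summands each of value at most $\max_{0\leq j\leq k}a_j$, so $\max_n B_n\leq p\cdot\max_{0\leq j\leq k}{k+c-2-j \choose c-2}{q/p-1 \choose \lfloor j/p\rfloor}$, which is exactly the stated bound.

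The main obstacle is verifying log-concavity of $\{B_n\}$. The factor ${q/p-1 \choose n}$ is log-concave by the classical binomial inequality, and the hockey-stick difference is a polynomial in $n$; showing that their product is log-concave reduces to a cross-ratio inequality between falling factorials, with some care needed at the final (possibly partial) block where $m_n$ can drop below $p-1$. If this direct verification proves cumbersome, a fallback is Abel summation applied to (\ref{4.1}) itself, using monotonicity of ${k+c-2-j \choose c-2}$ in $j$ to pair and cancel opposite-sign terms across adjacent blocks explicitly; this weaker route still yields a bound of the form $C_p\cdot \max_j a_j$ for a constant depending only on $p$, which after checking constants should suffice for the factor $p$ claimed in the lemma.
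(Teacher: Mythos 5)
Your overall architecture (write $R_k^c$ as a signed sum of the nonnegative terms $a_j={k+c-2-j\choose c-2}{q/p-1\choose\lfloor j/p\rfloor}$, use a concavity/unimodality property to bound the alternating sum by a single maximal term, and pay a factor $p$ for the block size) is sound, but the step you yourself flag as the main obstacle --- log-concavity of the block sums $B_n$ --- is exactly the crux, and you leave it unproved. As written the argument is incomplete: without unimodality of $\{B_n\}$ the inequality $|\sum_n(-1)^nB_n|\leq\max_nB_n$ simply fails (consider $1,0,1$), so everything hinges on that claim. The claim is in fact true: the window sum $\sum_{m=0}^{p-1}{k+c-2-j-m\choose c-2}$ is the convolution of the log-concave sequence ${k+c-2-j\choose c-2}$ with the indicator of an interval, hence log-concave in $j$; its restriction to the progression $j=np$ stays log-concave; the product with the log-concave factor ${q/p-1\choose n}$ is log-concave; and truncating the final partial block only decreases the last term, which cannot violate the log-concavity inequalities. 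But none of this is in your write-up, and the ``fallback'' via Abel summation with an unspecified constant $C_p$ is not a proof of the stated factor $p$.

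The paper sidesteps this difficulty by decomposing the other way: instead of grouping consecutive indices into blocks of length $p$, it splits the sum over $j$ into the $p$ residue classes $j\equiv r\ (\mathrm{mod}\ p)$. On each class the sign $(-1)^{\lfloor j/p\rfloor}$ alternates at every step, and the subsequence $a_{ip+r}$ is a subsequence, along an arithmetic progression, of the single log-concave (hence unimodal) sequence $a_j$ --- so it is unimodal with no further work. The elementary bound $\sum_i(-1)^ia_{ip+r}\leq\max_i a_{ip+r}\leq\max_j a_j$ then applies to each of the $p$ classes, and summing gives the factor $p$. If you want to keep your block decomposition, you must supply the convolution argument above; otherwise switch to the residue-class decomposition, which needs only the log-concavity of $a_j$ itself.
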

\begin{proof}
It is easy to check that both the two sequences ${k+c-2-j\choose
c-2}$ and ${q/p-1\choose \lfloor j/p \rfloor}$ are log concave on
$j$. Thus the sequence $a_j={k+c-2-j\choose c-2}{q/p-1 \choose \lfloor j/p \rfloor}$ is
also log concave on $j$ by the definition of logarithmic concavity .
Since a log concave sequence must be
unimodal, $\{a_j\}$ is unimodal on $j$. Then we have
\begin{eqnarray*}
R_k^c&=&-\sum_{j=0}^k(-1)^{\lfloor j/p \rfloor
}a_j\\
&=&-\sum_{i=0}^{\lfloor k/p \rfloor}(-1)^ia_{ip}-
\cdots-\sum_{i=0}^{\lfloor k/p \rfloor}(-1)^ia_{ip+<k>_p}\cdots-
\sum_{i=0}^{\lfloor k/p \rfloor-1}(-1)^ia_{ip+p-1}.
\end{eqnarray*}
Thus (\ref{4.7}) follows from  the following simple inequality
$$ \sum_{i=0}^k (-1)^i a_i\leq \max_{0\leq i\leq k}a_i,$$
and the proof is complete.
\end{proof}
\textbf{Proof of Theorem \ref{thm1.1}} \ When $q>p$
 we rewrite (\ref{4.3}) to be
  $$N(k,b,D)
= \frac1q{q-c \choose k}+\frac1q(-1)^k(R^c_k-qM^c_k).$$ Applying
(\ref{4.5}) we obtain
\begin{eqnarray}\label{4.8}
 \left |N(k,b,D)-\frac 1 q
{q-c \choose k} \right| \leq \frac {q-p} q{k+c-2 \choose c-2}{q/p-2
\choose \lfloor
 k/p\rfloor}.
  \end{eqnarray}
  If $a_1=0$, and $b,a_2,\cdots,a_c$ are linear independent over
 ${\bf F}_p$, then $S_k^c=0$ for any $k$.
Thus  from (\ref{4.4}) and Lemma \ref{lem4.4}
 we have the improved bound
 \begin{eqnarray}\label{4.9}\bigg |N(k,b,D)-\frac 1 q {q-c \choose k} \bigg |
 \leq {p\over q}\max_{0\leq j\leq k}{k+c-2-j\choose c-2}{q/p-1 \choose \lfloor j/p
 \rfloor}.
  \end{eqnarray}
  Thus we only need to verify the case $q=p$. When
$q=p$, from Lemma \ref{lem4.1} we have
 $$R^{c}_k=-\sum_{j=0}^{k}{k+c-2-j \choose c-2}=-{k+c-1 \choose c-1}.$$
And $S(k,b)$ equals $0$ or $-1$  by its definition given in Lemma \ref{lem 3.5}.
   Thus from (\ref{4.3}) we deduce that
\begin{eqnarray}\label{4.10}
N(k,b,D)=\frac{{p-c \choose k}-(-1)^k{k+c-1 \choose k}
}{p}+(-1)^kM_k^c
\end{eqnarray}
with $0\leq M_k^c\leq{k+c-2 \choose
k}$. Thus
$$   \left
|N(k,b,D)-\frac 1 q {q-c \choose k}+\frac{(-1)^k}q{k+c-1 \choose
c-1} \right|
 \leq  {k+c-2\choose c-2}.$$
Note that $c=q-n$  and the proof is complete.
\begin{ex}
Choose $p=2, q=128, c=4$ and $k=5$. Then $R_k^c=-6840$.
Let $\omega$ be a primitive
element in ${\bf F}_{128}$. Let
$D=F_{128}\backslash{\{0,\omega,\omega^2,\omega^3\}}$ and $b=1$.
Since $1,\omega,\omega^2,\omega^3$ are linear independent,
(\ref{4.4}) gives that there are  $N=1759038$ solutions of the equation (1.1)
compared with the average number $\frac 1 q{q-c\choose
k}\approx1758985$.
 \end{ex}

 {\bf Remark}.\ \ \   If one obtains better bounds for
$S_k^c$, then we can improve the bound given by (\ref{4.8}).
  However, it is much more complicated to evaluate $S_k^c$  than
$R_k^c$. Let
$$I=\{[i_1,i_2,\cdots,i_{c-2}], 0 \leq i_t\leq
k-\sum_{j=1}^{t-1}i_{j},1\leq t \leq c-2:\ \
b-\sum_{j=1}^{c-2}i_{j}a_{c+1-j} \in {\bf F}_p\}.$$  Simple counting
shows that $0 \leq |I|\leq {k+c-2\choose c-2}.$ In the  proof of
(\ref{4.8}) we use the upper bound $|I|\leq {k+c-2\choose c-2}$ and
in the proof of (\ref{4.4}) it is the special case $|I|=0$. We can
  improve the above bound  if we know more information about
   the cardinality of $I$, which is determined by the set ${b, a_2, \cdots, a_c}$.
     For example,
  if we know more about the rank of the set
$\{b,a_2,\cdots,a_c\}$, then we  can  improve the bound given by
(\ref{4.8}). The details are omitted.

\section{Applications to Reed-Solomon Codes}

Let $D=\{x_1,\cdots, x_n\} \subset {\bf F}_q$ be a subset of
cardinality $|D|=n>0$. For $1\leq k\leq n$, the Reed-Solomon code
$D_{n,k}$ has the codewords of the form
$$(f(x_1), \cdots, f(x_n))\in {\bf F}_q^n,$$
where $f$ runs over all polynomials in ${\bf F}_q[x]$ of degree at
most $k-1$. The minimum distance of the Reed-Solomon code is $n-k+1$
because a non-zero polynomial of degree at most $k-1$ has at most
$k-1$ zeroes. For  $u=(u_1,u_2,\cdots,u_{n})\in {\bf F}_q^n$,  we
can associate a unique polynomial $u(x)\in {\bf F}_q[x]$ of degree
at most $n-1$ such that
$$u(x_i)=u_i,$$
for all $1\leq i\leq n$. The polynomial $u(x)$ can be computed
quickly by solving the above linear system. Explicitly, the
polynomial $u(x)$ is given by the Lagrange interpolation formula
$$u(x) = \sum_{i=1}^n u_i \frac{\prod_{j\not=i}(x-x_j)}{\prod_{j\not=i}(x_i-x_j)}.$$
Define $d(u)$ to be the degree of the associated polynomial $u(x)$
of $u$. It is easy to see that $u$ is a codeword if and only if
$d(u)\leq k-1$.

For a given $u\in {\bf F}_q^n$,  define
$$d(u, D_{n,k}): = \min_{v\in D_{n,k}}d(u,v).$$
The maximum likelihood decoding of $u$ is to find a codeword $v\in
D_{n,k}$ such that $d(u,v) = d(u, D_{n,k})$. Thus, computing
$d(u,D_{n,k})$ is essentially the decision version for the maximum
likelihood decoding problem, which is ${\bf NP}$-complete for
general subset $D\subset {\bf F}_q$. For standard Reed-Solomon code
with $D={\bf F}_q^*$ or ${\bf F}_q$, the complexity of the maximum
likelihood decoding is unknown to be {\bf NP}-complete. This is an
important open problem. It has been shown by Cheng-Wan \cite{CW1,CW2} to
be at least as hard as the discrete logarithm problem.

When $d(u)\leq k-1$,  then $u$ is a codeword and thus
$d(u,D_{n,k})=0$. We shall assume that $k \leq d(u)\leq n-1$. The
following simple result gives an elementary bound for $d(u,
D_{n,k})$.

\begin{thm} Let $u\in {\bf F}_q^n$ be a word such that $k \leq d(u)\leq n-1$.
Then,
$$n-k \geq d(u, D_{n,k}) \geq n-d(u).$$
\end{thm}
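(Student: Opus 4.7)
The plan is to prove the two inequalities separately, both by elementary arguments that exploit the identification of words in ${\bf F}_q^n$ with polynomials of degree at most $n-1$ via Lagrange interpolation.

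For the upper bound $d(u,D_{n,k}) \leq n-k$, I would construct an explicit codeword close to $u$. Pick any $k$ indices, say $\{1,2,\ldots,k\}$, and let $v(x) \in {\bf F}_q[x]$ be the unique polynomial of degree at most $k-1$ satisfying $v(x_i) = u_i$ for $1 \leq i \leq k$, obtained by Lagrange interpolation through these $k$ points. Then the associated codeword $v = (v(x_1),\ldots,v(x_n))$ lies in $D_{n,k}$ and agrees with $u$ in at least the first $k$ coordinates, so $d(u,v) \leq n-k$, which yields $d(u,D_{n,k}) \leq n-k$.

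For the lower bound $d(u,D_{n,k}) \geq n-d(u)$, I would argue by contradiction with the degree. Let $v \in D_{n,k}$ be any codeword and write $t = d(u,v)$, so $u$ and $v$ agree on exactly $n-t$ of the points $x_1,\ldots,x_n$. The associated polynomial $v(x)$ has degree at most $k-1$, while by hypothesis $u(x)$ has degree $d(u) \geq k$, so the difference $u(x)-v(x)$ has degree exactly $d(u)$. On the other hand, $u(x)-v(x)$ vanishes at the $n-t$ points where $u$ and $v$ agree, so it has at least $n-t$ distinct roots in ${\bf F}_q$. A nonzero polynomial over a field has at most as many roots as its degree, hence $d(u) \geq n-t$, i.e. $t \geq n-d(u)$. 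Taking the minimum over $v \in D_{n,k}$ gives $d(u,D_{n,k}) \geq n-d(u)$.

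Neither direction presents a real obstacle: the whole statement is a clean consequence of the interpolation correspondence between words and polynomials together with the fact that a polynomial of degree $d$ has at most $d$ roots. The only mild subtlety is noting that $u(x) - v(x)$ really has degree $d(u)$, which uses the strict inequality $\deg v \leq k-1 < k \leq d(u)$ provided by the hypothesis $d(u) \geq k$; without this hypothesis the lower bound need not hold in the stated form (it would become trivial). Combining the two inequalities yields the desired sandwich $n-k \geq d(u,D_{n,k}) \geq n-d(u)$.
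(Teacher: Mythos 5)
Your proposal is correct and follows essentially the same route as the paper: the lower bound via the degree bound on roots of $u(x)-v(x)$ (which is nonzero of degree exactly $d(u)$ since $\deg v\leq k-1<k\leq d(u)$), and the upper bound by producing the codeword that interpolates $u$ at $k$ chosen points of $D$ (the paper phrases this as taking the remainder of $u(x)$ modulo $(x-x_1)\cdots(x-x_k)$, which is the same polynomial). No gaps.
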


{\bf Proof}. Let $v=(v(x_1),\cdots, v(x_n))$ be a codeword of
$D_{n,k}$, where $v(x)$ is a polynomial in ${\bf F}_q[x]$ of degree
at most $k-1$. Then,
$$d(u,v)= n - N_D(u(x)-v(x)),$$
where $N_D(u(x)-v(x))$ denotes the number of zeros of the polynomial
$u(x)-v(x)$ in $D$. Thus,
$$d(u, D_{n,k}) = n - \max_{v\in D_{n,k}} N_D(u(x)-v(x).$$
Now $u(x)-v(x)$ is a polynomial of degree equal to $d(u)$. We deduce
that
$$N_D(u(x)-v(x)) \leq d(u).$$
It follows that
$$d(u, D_{n,k}) \geq n -d(u).$$
The lower bound is proved. To prove the upper bound, we choose a
subset $\{ x_1,\cdots, x_k\}$ in $D$ and let $g(x)=(x-x_1)\cdots
(x-x_k) $. Write
$$u(x)=g(x)h(x) +v(x),$$
where $v(x)\in {\bf F}_q[x]$ has degree at most $k-1$. Then,
clearly, $N_D(u(x)-v(x))\geq k$. Thus
$$d(u, D_{n,k}) \leq n-k.$$
The theorem is proved.

We call $u$ to be a deep hole if $d(u,D_{n,k})=n-k$, that is, the
upper bound in the equality holds. When $d(u)=k$, the upper bound
agrees with the lower bound and thus $u$ must be a deep hole. This
gives $(q-1)q^k$ deep holes. For a general Reed-Solomon code
$D_{n,k}$, it is already difficult to determine if a given word $u$
is a deep hole. In the special case that $d(u)=k+1$, the deep hole
problem is equivalent to the subset sum problem over ${\bf F}_q$
which is {\bf NP}-complete if $p>2$.

For the standard Reed-Solomon code, that is, $D={\bf F}_q^*$ and
thus $n=q-1$, there is the following interesting conjecture of
Cheng-Murray \cite{CM}.

\bigskip
\textbf{Conjecture}  Let $q=p$. For the standard Reed-Solomon code
with $D={\bf F}_p^*$,  the set $\{u\in {\bf F}_p^n \big| d(u)=k\}$
gives the set of all deep holes.

\bigskip
Using the Weil bound, Cheng and Murray proved that their conjecture
is true if $p$ is sufficiently large compared to $k$.

The deep hole problem is to determine when the upper bound in the
above theorem agrees with $d(u, D_{n,k})$. We now examine when the
lower bound $n-d(u)$ agrees with $d(u, D_{n,k})$. It turns out that
the lower bound agrees with $d(u, D_{n,k})$ much more often.
We call $u$ {\bf ordinary}
if $d(u, D_{k,n})=n-d(u)$. A basic problem is then
to determine for a given word $u$,
 when $u$ is ordinary.

Without loss of generality, we can assume that $u(x)$ is monic and
$d(u)=k+m$, $0\leq m \leq n-k$. Let
$$u(x)=x^{k+m} -b_1x^{k+m-1}+\cdots +(-1)^{m} b_{m}x^k +\cdots +(-1)^{k+m} b_{k+m}$$
be a monic polynomial in ${\bf F}_q[x]$ of degree $k+m$. By
definition, $d(u, D_{n,k})=n-(k+m)$ if and only if there is a
polynomial $v(x)\in {\bf F}_q[x]$ of degree at most $k-1$ such that
$$u(x)-v(x) =(x - x_1)\cdots (x-x_{k+m}),$$
with $x_i \in D$ being distinct. This is true if and only if the
system
$$\sum_{i=1}^{k+m} X_i =b_1,$$
$$\sum_{1\leq i_1<i_2 \leq k+m}X_{i_1}X_{i_2} =b_2,$$
$$\cdots,$$
$$\sum_{1\leq i_1<i_2<\cdots <i_m\leq k+m}X_{i_1}\cdots X_{i_m}=b_m.$$
has distinct solutions $x_i\in D$. This explains our motivational
problem in the introduction section.

When $d(u)=k$, then $u$ is always a deep hole. The next non-trivial
case is when $d(u)=k+1$. Using the bound in Theorem \ref{thm1.1}, we obtain
some positive results related to the deep hole problem in the case
$d(u)=k+1$ (i.e., the case $m=1$) if $q-n$ is small.
 When $q-n \leq 1$,  by Corollary \ref{cor2.8}
 we first have the  following simple consequence.
\begin{cor}\label{cor5.2}
Let $q\geq n\geq q-1$ and $q>5$. Let $d(u)=k+1$ with $2<k<q-3$. Then $u$
cannot be a deep hole.
\end{cor}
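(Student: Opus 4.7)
The plan is to reduce the non-deep-hole condition in the case $d(u)=k+1$ to the positivity of a single subset-sum count $N(k+1,b_1,D)$, and then quote the positivity results already proved in Section~2.

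First I would normalize $u(x)$ to be monic, writing $u(x)=x^{k+1}-b_1x^k+(\text{lower order})$. By the discussion preceding this corollary applied with $m=1$, the word $u$ fails to be a deep hole precisely when $u(x)-v(x)=\prod_{i=1}^{k+1}(x-x_i)$ for some $v\in\mathbf{F}_q[x]$ of degree at most $k-1$ and some distinct $x_1,\ldots,x_{k+1}\in D$. Because $v$ is free to absorb any coefficients in degrees $0$ through $k-1$, the only constraint imposed by matching coefficients is that of $x^k$, namely $x_1+\cdots+x_{k+1}=b_1$. Hence $u$ is not a deep hole if and only if $N(k+1,b_1,D)>0$, so it suffices to show this positivity for every $b_1\in\mathbf{F}_q$.

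Next I would split on the value of $n\in\{q-1,q\}$. If $n=q$ then $D=\mathbf{F}_q$, and the positivity of $N(k+1,b_1,\mathbf{F}_q)$ for every $b_1$ follows from the corollary placed immediately before Section~5: the hypothesis $2<k<q-3$ yields $0<k+1<q$ in odd characteristic and $2<k+1<q-2$ in characteristic two, so that corollary applies. If $n=q-1$, write $D=\mathbf{F}_q\setminus\{a\}$; the substitution $y_i=x_i-a$ converts the count to
\[
N(k+1,b_1,D)=N(k+1,b_1-(k+1)a,\mathbf{F}_q^*),
\]
so it reduces to proving $N(k+1,b,\mathbf{F}_q^*)>0$ for every $b$, and this is the content of Corollary~\ref{cor2.8} with subset-size $k+1$ once $q>5$ and once $k+1$ is verified to lie in the admissible range of that corollary.

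The main obstacle I foresee is a boundary case in characteristic two: when $k+1=q-3$, the subset-size hypothesis of Corollary~\ref{cor2.8} (which requires $2<k'<q-3$ strictly) just fails, so the positivity cannot be quoted directly. I would try to close this gap by combining the symmetry $N(k',b,\mathbf{F}_q^*)=N(q-1-k',-b,\mathbf{F}_q^*)$ from Corollary~\ref{cor2.7} with the explicit evaluation of $N(2,b,\mathbf{F}_q^*)$ provided by Theorem~\ref{thm1.2}, reducing the endpoint question to a direct check on subset size two for every target $b$. This final verification is the delicate part of the argument.
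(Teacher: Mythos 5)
Your reduction of the non-deep-hole condition to the positivity of $N(k+1,b_1,D)$, and the subsequent case split on $n\in\{q,q-1\}$ with appeals to Corollary \ref{cor2.8} and the corollary for $D={\bf F}_q$, is exactly the route the paper takes; its own proof is essentially a one-line citation of Corollary \ref{cor2.8}. The case $n=q$ is fully covered as you say, since $2<k+1\leq q-3<q-2$. The substantive point is the endpoint you flag for $p=2$, $n=q-1$, $k+1=q-3$ (i.e.\ $k=q-4$, which does satisfy $2<k<q-3$ once $q\geq 8$): there the hypothesis of Corollary \ref{cor2.8} fails for subset size $k+1$, and your argument is left open at precisely this case.

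Unfortunately the patch you propose cannot close it. By the symmetry $N(q-3,b,{\bf F}_q^*)=N(2,-b,{\bf F}_q^*)$ the endpoint does reduce to subset size two, but in characteristic $2$ one has $N(2,0,{\bf F}_q^*)=0$: two \emph{distinct} elements never sum to zero, and Theorem \ref{thm1.2} confirms $\frac 1q{q-1\choose 2}-\frac{q-1}{q}\left(\frac q2-1\right)=0$. Hence $N(q-3,0,{\bf F}_q^*)=0$, so for $D={\bf F}_q\setminus\{a\}$ the monic word $u$ of degree $q-3$ whose $x^{q-4}$-coefficient corresponds to $b_1=(q-3)a$ admits no $v$ of degree $\leq k-1$ with $u-v$ splitting into $q-3$ distinct linear factors over $D$; by the upper-bound argument of Theorem 5.1 this forces $d(u,D_{n,k})=n-k$, so such a $u$ \emph{is} a deep hole ($q=8$, $k=4$, $D={\bf F}_8^*$, $b_1=0$ is a concrete instance). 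The gap you identified is therefore genuine, and it is in fact a gap in the corollary as stated: the paper's proof applies Corollary \ref{cor2.8} with parameter $k$ where it should be $k+1$, and for $p=2$, $n=q-1$ the admissible range needs to shrink to $2<k<q-4$. Your write-up is more careful than the paper's, but the ``delicate final verification'' you defer is not a verification --- it is exactly where the statement fails.
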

\begin{proof}
 By the above discussion, $u$ is not a deep hole if and only if
the equation
$$x_1+x_2+\cdots+ x_{k+1}=b$$ always has distinct solutions in $D$
for any $b\in F_q $.  Thus the result follows from Corollary
\ref{cor2.8}.
\end{proof}

 {\bf Remark}.\ \ Similarly, using Theorem \ref{thm1.1}, a simple
asymptotic argument implies that when $q-n$ is a constant, and $d(u)=k+1$ with $2<k<q-3$,
then $u$ cannot be a deep hole for sufficient large $q$.
 Furthermore, for given $q,n$, asymptotic analysis can give
sufficient conditions for $k$ to ensure a degree-$k+1$ word $u$
not being a deep hole.

In the present paper, we studied the case $m=1$ and explored some of
the combinatorial aspects of the problem. In a future article, we
plan to study the case $m>1$ by combining the ideas of the present
papers with algebraic-geometric techniques such as the Weil bound.


\end{document}